\documentclass[11pt,reqno]{article}
\usepackage{amsmath, amssymb, amsthm, mathrsfs}
\usepackage{amsfonts}
\usepackage{cases}
\usepackage{graphicx}
\usepackage{xcolor}
\usepackage[margin=0.8in]{geometry}
\usepackage[utf8]{inputenc}
\usepackage{verbatim}
\usepackage{caption,subcaption}
\usepackage[colorlinks,citecolor=blue,urlcolor=blue,linkcolor=blue]{hyperref}
\usepackage{bm}
\usepackage{algorithm}
\usepackage{algpseudocode}
\usepackage[normalem]{ulem}
\makeatletter
\renewcommand{\fnum@algorithm}{\fname@algorithm}

\numberwithin{equation}{section}
\newtheorem{Definition}{Definition}[section]
\newtheorem{Remark}{Remark}[section]
\newtheorem{Theorem}{Theorem}[section]
\newtheorem{Lemma}{Lemma}[section]
\newtheorem{Proposition}{Proposition}[section]

\newtheorem{Assumption}{Assumption}[section]

\newcommand{\be}{\begin{equation}}
 \newcommand{\ee}{\end{equation}}
\newcommand{\bee}{\begin{equation*}}
 \newcommand{\eee}{\end{equation*}}
\newcommand{\bi}{\begin{itemize}}
 \newcommand{\ei}{\end{itemize}}
\DeclareMathOperator{\tr}{tr}
\DeclareMathOperator*{\argmax}{arg\,max}

\usepackage{algorithm}
\usepackage{algpseudocode}
\usepackage{soul}

\allowdisplaybreaks

\usepackage{accents}

\def \rightarrow{{\to}}
\def \E{\mathbb{E}}

\def \N{\mathbb{N}}
\def \P{\mathbb{P}}

\def \R{\mathbb{R}}

\def \Cc{{\mathcal C}}

\def \Ac{{\mathcal A}}
\def \Ec{{\mathcal E}}
\def \Pc{{\mathscr P}}

\def \Mc{{\mathcal M}}
\def \Wc{{\mathcal W}}
\def \eps{\varepsilon}
\def \Leb{\operatorname{\texttt{Leb}}}

\def \ul{\operatorname{\textbf{ul}}}
\def \wg{\operatorname{\textbf{wg}}}

\title{Existence of Relaxed Equilibrium for Time-Inconsistent Mean Field Games: A Set-Valued Fixed Point Approach}

\author{
 Zhenhua Wang\thanks{Zhongtai Securities Institute for Financial Studies, Shandong University, Jinan, Shandong, China. Email: \url{zhenhuaw@sdu.edu.cn}}
\and Zhou Zhou\thanks{School of Mathematics and Statistics, University of Sydney, Sydney, Australia. Email: \url{zhou.zhou@sydney.edu.au}}
}

\begin{document}
\date{}
\maketitle

\begin{abstract}	
This paper studies the existence of relaxed equilibria for finite-horizon continuous-time time-inconsistent mean field games. We work directly on the product space of relaxed feedback policies and population flows. The policy component is endowed with the stable topology of Young measures, while the population component is restricted to a compact  convex set of Wasserstein-continuous flows with uniform moment and time-regularity bounds. For every policy-flow pair, we establish uniform Sobolev and H\"older estimates for the associated auxiliary value function and prove its stability under Young-measure convergence of policies and uniform Wasserstein convergence of population flows. We also establish continuity of the induced population-flow map. The latter requires a duality argument for the Fokker--Planck equations because Young-measure convergence yields only weak-$*$ convergence of the controlled drifts. We then construct a set-valued best-response/consistency map with nonempty compact convex values and apply the Kakutani--Fan--Glicksberg fixed-point theorem. The resulting fixed point satisfies both the equilibrium response condition of the intra-personal game and the mean field consistency condition, , thereby establishing the existence of a relaxed equilibrium.\\\\

\noindent\textbf{Keywords}: Time-inconsistent mean field game, relaxed equilibrium, set-valued fixed point, Young measure, Fokker--Planck equation

\end{abstract}

	\section{Introduction}
	
Mean field game (MFG) theory, pioneered independently by Lasry and Lions \cite{lasry2007mean} and Huang, Malham\'e, and Caines \cite{huang2006large}, provides a tractable framework for strategic interactions in large populations of agents. In a standard MFG, a representative agent optimizes an individual objective for a prescribed population distribution, while equilibrium requires consistency between the prescribed distribution and the law generated by the optimal response. A classical analytical formulation describes this equilibrium through a coupled Hamilton--Jacobi--Bellman (HJB) and Fokker--Planck (FP) system. Probabilistic formulations based on forward--backward stochastic differential equations provide another important approach; see, e.g., \cite{carmona2018probabilistic, carmona_probabilistic_2013} and the references therein.

Relaxation and compactification methods form another important part of the existence theory for MFGs. By enlarging strict controls to relaxed controls, or by working with laws of controlled processes or occupation measures, one gains compactness and convexity that may be unavailable at the level of pointwise controls. Such ideas have been extensively used in time-consistent stochastic control and MFGs; see, e.g., \cite{carmona2018probabilistic, common_2016, guo2022entropy, lacker2015mean} and the references therein. They are especially useful when a Hamiltonian optimizer is nonunique or lacks the regularity required to define a continuous strict feedback response.

The present paper concerns MFGs in which the representative agent's objective is \emph{time-inconsistent}. Time inconsistency arises naturally in economic and financial models: A policy preferred at the initial decision time may cease to be preferred by a future self. So the usual dynamic programming principle and the associated notion of a globally optimal policy are no longer appropriate. Following the seminal idea of Strotz \cite{Strotz}, a widely adopted approach is to view the decision maker as a collection of selves indexed by time and to seek an equilibrium of the resulting intra-personal game. This equilibrium approach has led to a substantial literature on time-inconsistent stochastic control; see, among others, \cite{Bjork17,bjork2021time, he2021equilibrium, huang2021strong,yong2012time}.

In continuous time, these equilibria are typically characterized by an extended HJB system or an equilibrium HJB (EHJB) equation, which is nonlinear and nonlocal. Classical verification arguments require substantial solution regularity, yet classical solvability is difficult to obtain under general assumptions. This regularity issue is one of the main obstacles to general equilibrium existence results for time-inconsistent diffusion control. A closely related line of work uses entropy regularization to overcome some of these difficulties. In particular, \cite{bayraktar2025relaxed} studies relaxed equilibria for time-inconsistent Markov decision processes and develops an entropy-regularization approach to equilibrium existence. This work demonstrates how relaxation and compactness can restore existence when strict equilibria need not exist. For continuous-time time-inconsistent diffusion control, \cite{wang2026equilibrium} develops a vanishing-entropy approach under which regularized EHJB solutions converge to a strong solution of the original EHJB, leading to equilibrium existence without requiring a classical solution. Extending this program to MFGs, \cite{bayraktarWangyuzhang2026equilibrium} establishes existence of relaxed equilibria for general continuous-time time-inconsistent MFGs through entropy regularization and a vanishing-entropy limit, together with a detailed analysis of the population flows.

There are also recent existence results that do not rely on entropy regularization. 
 In discrete time, \cite{zhou2026existence} establishes existence of Markov relaxed equilibria for general time-inconsistent stochastic games with uncountable state spaces, including mean field games, by exploiting the compactness of strategies under a weak-star type topology. 
 The work \cite{BW2025} studies an infinite-horizon discrete-time time-inconsistent MFG, introduces equilibrium notions for sophisticated agents, and relates them to the corresponding finite-player games. These results provide discrete-time counterparts to the continuous-time problem considered here.

Compared with the extensive literature on time-consistent MFGs, the theory of time-inconsistent MFGs is still relatively limited. A number of works study linear--quadratic or other structured models in which equilibria can be characterized more explicitly; see, e.g., \cite{guan2022time,moon2020linear,ni2018}. 
The distinction is important: in a time-inconsistent MFG, one must simultaneously resolve the intra-personal equilibrium problem of the representative agent and the consistency of the population distribution.

In this paper, we establish the existence of relaxed equilibria for a general class of finite-horizon continuous-time time-inconsistent MFGs \emph{directly, without introducing entropy regularization}. For a relaxed feedback policy $\pi$ and a candidate population flow $m$, let $V^{\pi,m}$ denote the associated auxiliary value function and $\mu^{\pi,m}$ the population flow induced by the controlled state process. We formulate equilibrium as a fixed point of a set-valued correspondence whose policy component consists of relaxed maximizers of the equilibrium Hamiltonian determined by $D_xV^{\pi,m}(t,t,x)$, while its population component is $\mu^{\pi,m}$.

The main analytical issue is  the continuity of this correspondence under a topology weak enough to make the policy space compact. We identify relaxed feedback policies with Young measures and use their stable topology. Uniform parabolic Sobolev and H\"older estimates yield stability of $V^{\pi,m}$ under simultaneous Young-measure convergence of policies and uniform Wasserstein convergence of population flows. For the population component, Young-measure convergence gives only weak-$*$ convergence of the controlled drifts, so standard strong SDE stability is unavailable. We instead combine a backward-PDE/Fokker--Planck duality argument with compactness of the state densities to prove the continuity of $(\pi,m)\mapsto\mu^{\pi,m}$.

These ingredients allow us to apply the Kakutani--Fan--Glicksberg fixed-point theorem on a compact convex product space of relaxed policies and population flows. The fixed-point policy satisfies an integrated Hamiltonian maximization condition, which is converted into the pointwise almost-everywhere equilibrium Hamiltonian condition by measurable selection. Finally, an It\^o--Krylov verification argument for the resulting strong EHJB solution shows that the fixed point satisfies the intra-personal equilibrium response condition, while the population component of the fixed-point identity gives mean field consistency.

Our approach is complementary to the entropy-regularization method of \cite{bayraktarWangyuzhang2026equilibrium}. That work establishes regularized equilibria through Gibbs-form responses, obtains an equilibrium of the original MFG by sending the entropy weight to zero, and proposes a policy iteration algorithm whose convergence is proved under a short horizon and weak terminal interaction. Entropy regularization is crucial there: it produces the smooth, single-valued Gibbs response that makes both the fixed-point construction and the policy-iteration analysis tractable. 
Here we work directly with the original unregularized problem: the possible nonuniqueness of Hamiltonian maximizers is handled by a set-valued policy correspondence, and compactness is supplied by the Young-measure topology. Some of the stability mechanisms that arise in the vanishing-entropy analysis---in particular, compactness of auxiliary value functions and the treatment of weakly convergent controlled drifts through Fokker--Planck arguments---remain essential, but they are used here to establish continuity of the fixed-point correspondence itself rather than convergence from a regularized problem. Thus the two approaches provide distinct and complementary existence mechanisms for relaxed equilibria in continuous-time time-inconsistent MFGs.


The rest of the paper is organized as follows. Section \ref{subsect:nota} collects the notation and function spaces used throughout the paper. Section \ref{sec:model} introduces the finite-horizon time-inconsistent MFG, relaxed feedback policies, and the notion of equilibrium. Section \ref{sec:fixedpoint} develops the compact policy-flow space, establishes the uniform estimates and stability results for the auxiliary value functions and population flows, constructs the set-valued best-response/consistency correspondence, and proves the existence of a relaxed equilibrium by the Kakutani--Fan--Glicksberg fixed-point theorem.

\subsection{Notations}\label{subsect:nota} 
Let $\mathbb{N}$ be the set of all positive integers and $T>0$ be the finite horizon. In the $m$-dimensional Euclidean space $\R^m$,  we denote by $| \cdot|$ the Euclidean norm, by $\Leb(\cdot)$ the Lebesgue measure, and by $B_r(x)$ the ball centered at $x$ with radius $r$.

Given a function $w(t,x):[0,T]\times\R^d\rightarrow \R$, let $\partial_tw$ denote the right derivative on the time variable $t$ and $D_xw$ (resp. $D^2_xw$) denote the first (resp. second) partial derivative vector (Hessian) on the space variable $x$. 
Let $D$ be a generic domain in $[0,T] \times \R^d$. We first recall the notations of parabolic H\"older spaces for $\alpha \in (0,1)$:
\begin{align*}
 \|w\|_{\Cc^0(D)} &:= \sup_{(t,x)\in D} |w(t,x)|= \|w\|_{L^\infty(D)},\quad
 [w]_{\Cc_{\alpha}(D)} := \sup_{\substack{(t,x)\neq (s,y)\in D\\ (|t-s|+|x-y|^2)\leq 1}} \frac{|w(t,x)-w(s,y)|}{(|t-s|+|x-y|^2)^{\alpha/2}}, \\
 \|w\|_{\Cc_{\alpha}(D)} &:= [w]_{\Cc_{\alpha}(D)}+\|w\|_{\Cc^0(D)}.
\end{align*}
Given a multi-index $a=(a_1,\cdots, a_d)$ with $|a|_{l_1}:= \sum_{i=1}^d a_i$, define $D_x^a w:= \frac{\partial^{|a|_{l_1}} w }{\partial^{a_1}_{x_1}\cdots \partial^{a_d}_{x_d}}$
and we say $w\in \Cc^{l, k}_{\alpha}(D)$ 
(resp. $w\in \Cc^{l, k}(D)$) 
if 
$$
\|w\|_{\Cc^{l, k}_{\alpha}(D)}:= \sum_{0\leq b\leq l, 0\leq |a|_{l_1}\leq k} \|\partial^b_t D^a_x w \|_{\Cc_{\alpha}(D)}<\infty\; \Big(\text{resp. $\|w\|_{\Cc^{l, k}(D)}:=  \sum_{0\leq b\leq l, 0\leq |a|_{l_1}\leq k}\|\partial^b_t D^k_xw \|_{\Cc^0(D)}<\infty$} \Big).
$$ 
Define $D_N(t_0, x_0):= ((t_0, t_0+N)\cap [0,T])\times B_N(x_0) $ and write $D_N:= D_N(0,0)$, for $N>0$. 
We further define the following weighted  H\"older norm on $[0,T]\times \R^d$:
\begin{equation*}
 \|w\|_{\Cc^{l, k, \wg}_{\alpha}([0,T]\times \R^d)}:=\sum_{N\in \mathbb{N}} \frac{1}{2^N} \|w\|_{\Cc^{l, k}_{\alpha}(D_N)}.
\end{equation*}
Given $w(t,s,x)$ defined on the domain $\Delta_{[0,T]}\times \R^d$ with $\Delta_{[0,T]}:=\{(t,s):0\leq t\leq s\leq T\}$ and a spatial subset $Q \subset \R^d$, we further define the following ``$t$-H\"older norms": 
 \be\label{eq:t_norms}
    \begin{aligned}
 \|w\|_{\Cc^{l, k}_{\alpha, [t_0, t_1]\times Q}} &:= \sup_{t\in[t_0, t_1]} \|w(t,\cdot,\cdot)\|_{\Cc^{l, k}_{\alpha}([t, T]\times Q)},\,
 \|w\|_{\Cc^{l, k}_{[t_0, t_1]\times Q}} := \sup_{t\in[t_0, t_1]} \|w(t,\cdot,\cdot)\|_{\Cc^{l, k}([t, T]\times Q)},\\
 \|w\|_{\widetilde{\Cc}^{l, k}_{\alpha,[t_0, t_1]\times Q}} &:= \sup_{t\in[t_0, t_1]} \|w(t,\cdot,\cdot)\|_{\Cc^{l, k}_{\alpha}([t, T]\times Q)} + \sup_{t\in[t_0, t_1]} \|\partial_t w(t,\cdot,\cdot)\|_{\Cc^{l, k}_{\alpha}([t, T]\times Q)},\\
 \|w\|_{\widetilde{\Cc}^{l, k}_{[t_0, t_1]\times Q}} &:= \sup_{t\in[t_0, t_1]} \|w(t,\cdot,\cdot)\|_{\Cc^{l, k}([t, T]\times Q)} + \sup_{t\in[t_0, t_1]} \|\partial_t w(t,\cdot,\cdot)\|_{\Cc^{l, k}([t, T]\times Q)}.
\end{aligned}
\ee
When $Q=\R^d$, we simply omit the spatial domain in the subscript. 

We recall the Sobolev norm $\|w\|_{W^{1,2}_p(D)} := \sum_{2l+ |a| \leq 2} \|\partial^l_t D^a_x w \|_{L^p(D)}$ for a bounded domain $D$, and define the following ``uniformly local Sobolev norm" on $[t,T]\times \R^d$:
$$
\|w(t,\cdot,\cdot)\|_{W^{1,2,\ul}_p([t,T]\times\R^d)}:=\sup_{(s,x)\in [t, T]\times \R^d} \|w(t,\cdot, \cdot) \|_{W^{1,2}_p([s,s+1]\cap[0,T]\times B_1(x))}.
$$
We further define the ``$t$-Sobolev norm":
 \be\label{eq:t_norms2}
    \begin{aligned}
 \|w\|_{\widetilde{W}^{1,2}_{p,[t_0, t_1]}} &:= \sup_{t\in[t_0, t_1]}\|w(t,\cdot,\cdot)\|_{W^{1,2,\ul}_p([t,T]\times\R^d)},\\
 \|w\|_{\widehat{W}^{1,2}_{p,[t_0, t_1]}} &:= \sup_{t\in[t_0, t_1]}(\|w(t,\cdot,\cdot)\|_{W^{1,2,\ul}_p([t,T]\times\R^d)}+\|\partial_tw(t,\cdot,\cdot)\|_{W^{1,2,\ul}_p([t,T]\times\R^d)}). 
\end{aligned} 
\ee

Throughout the paper, we use the respective subscript of a norm to denote the functional space consisting of all functions for which that norm is finite (e.g., $w \in \widetilde{W}^{1,2}_{p,[t_0, t_1]}$ means $\|w\|_{\widetilde{W}^{1,2}_{p,[t_0, t_1]}} < \infty$). Let $0<\alpha<1$ be an arbitrarily fixed H\"older constant, and we shall choose the power $p$ in any Sobolev norm equal to $\frac{d+2}{1-\alpha}$.

Let $(\Omega, \mathcal{F}, \mathbb{F}, \mathbb{P})$ be a complete filtered probability space  supporting an $m$-dimensional standard Brownian motion $W = \{W_t\}_{t \in [0, T]}$. Let $\mathcal{F}_0$ be a given $\sigma$-algebra which is independent of the Brownian motion. Let $\mathbb{F}=\left\{\mathcal{F}_t: 0 \leq t \leq T\right\}$, where $\mathcal{F}_t:=\sigma\left\{\mathcal{F}_0, W(s): 0 \leq s \leq t\right\} \vee \mathcal{N}_0$, and $\mathcal{N}_0$ is the set of all $\mathbb{P}$-null sets. Denote by $\mathscr{P}\left(\R^d\right)$ the space of probability measures on $\R^d$.

Let $L_{\mathcal{F}}^2(\R^d)$ be the collection of $\R^d$-valued random variables with finite second moment, i.e.,
\begin{equation*}
 L_{\mathcal{F}}^2(\R^d) := \left\{\xi: \Omega \rightarrow \R^d \mid \xi \text{ is } \mathcal{F}\text{-measurable with } \mathbb{E}|\xi|^2<\infty \right\}.
\end{equation*}
$L_{\mathcal{F}}^2(\R^d)$ is equipped with the norm $\|\xi\|_{L^2} := \left(\mathbb{E}|\xi|^2\right)^{\frac{1}{2}}$. For any $\xi \in L^2(\R^d)$, denote by $\operatorname{law}(\xi)$ the distribution of $\xi$.

Let $\mathscr{P}_2(\R^d)$ be the space of probability measures with finite second moments equipped with the Wasserstein-2 metric $\Wc_2(\cdot, \cdot)$, i.e.,
\begin{equation*}
 \Wc_2^2(\rho, \gamma) := \inf_{\pi \in \Pi^{\rho, \gamma}} \int_{\R^d \times \R^d} |x-y|^2 \pi(dx, dy),
\end{equation*}
where
\begin{equation*}
 \Pi^{\rho, \gamma} := \left\{\pi \in \mathscr{P}_2(\R^d \times \R^d) : \pi(dx, \R^d)=\rho(dx), \pi(\R^d, dy)=\gamma(dy)\right\}.
\end{equation*}
It is easy to see that
 \be\label{ineq:w2}
 \Wc_2^2(\operatorname{law}(\xi), \operatorname{law}(\eta)) \leq \|\xi-\eta\|_{L^2}^2.
\ee
Let $\mathscr{P}_2^{\kappa, C}(\R^d)$ be a subset of $\mathscr{P}_2(\R^d)$ defined by
\begin{equation*}
 \mathscr{P}_2^{\kappa, C}(\R^d) := \left\{\rho \in \mathscr{P}_2(\R^d) : \int_{\R^d} |x|^{2+\kappa} \rho(dx) \leq C\right\}.
\end{equation*}
Note that $\mathscr{P}_2^{\kappa, C}(\R^d)$ is a compact subset of $\left(\mathscr{P}_2(\R^d), \Wc_2\right)$ for any $\kappa, C>0$.

Throughout the paper, we suppose that $\mathcal{F}_0$ is large enough such that for any $\rho \in \mathscr{P}_2(\R^d)$, there exists a $\xi \in \mathcal{F}_0$ such that $\operatorname{law}(\xi)=\rho$.

Let $\mathscr{M} := C\left([0, T], \mathscr{P}_2(\R^d)\right)$ be the set of $\mathscr{P}_2(\R^d)$-valued continuous curves on $[0, T]$ equipped with the uniform metric $d$, i.e.,
\begin{equation*}
 d(\mu_1, \mu_2) := \sup_{0 \leq t \leq T} \Wc_2(\mu_1(t), \mu_2(t)).
\end{equation*}
Since $\left(\mathscr{P}_2(\R^d), \Wc_2\right)$ is complete, so is $(\mathscr{M}, d)$. Write $\mathscr{M}_{\nu} := \{\mu \in \mathscr{M} : \mu(0)=\nu\}$. {Through this paper, let $0<\kappa<\infty$ be an arbitrarily fixed constant.} Define a subset of $\mathscr{M}_\nu$ by
\begin{equation*}
 \mathscr{M}_\nu^{\kappa, C} := \left\{\mu \in \mathscr{M}_\nu \;\middle|\; 
   \sup_{0 \leq t \neq s \leq T} \frac{\Wc_2^2(\mu(t), \mu(s))}{|t-s|} \leq C,  \;\text{and } \mu(t) \in \mathscr{P}_2^{\kappa, C}(\R^d) \text{ for any } t \in[0, T] 
\right\}.
\end{equation*}
By the well-known Arzel\`a-Ascoli lemma, $\mathscr{M}_\nu^{\kappa, C}$ is a compact and convex subset of $\mathscr{M}$.

 \section{Model Setup}\label{sec:model}
Let $U$ denote the action space, which is a compact subset of $\R^\ell$, and $\mathscr{P}(U)$ denote the set of all probability measures on $U$.
We focus on the model in which only the drift coefficient of the state process is controlled.
\begin{equation*}
 dX^\pi_s = \left( \int_U b(s, X^\pi_s, m_s, a)\pi(s, X^\pi_s, a)da \right) ds + \sigma(s, X^\pi_s, m_s) dW_s 
\end{equation*}
where $b:[0,T]\times \R^d\times \mathscr{P}_2(\R^d)\times U\rightarrow \R^d$, $\sigma:[0,T]\times \R^d\times \mathscr{P}_2(\R^d)\rightarrow \R^{d\times m}$. The payoff functional under the population flow $m$ for applying $\pi$ is then defined by
\begin{equation*}
 J^{\pi, m}(t,x) := \mathbb{E}_{t,x}\left[\int_t^T \left( \int_U r(l-t, X^{\pi}_l, m_l, a)\pi(l, X^{\pi}_l, a)da \right) dl + F(t, X^{\pi}_T, m_T) \right],
\end{equation*}
where $r:[0,T]\times \R^d\times \mathscr{P}_2(\R^d)\times U\rightarrow \R$ and $F:[0,T]\times\R^d\times \mathscr{P}_2(\R^d)\rightarrow \R$ are the reward functions. We further define the auxiliary function, for $(t,s,x)\in\Delta_{[0,T]}\times \R^d$, by
\begin{equation*}
 V^{\pi, m}(t,s,x) := \mathbb{E}_{s,x}\left[\int_s^T  \left( \int_U r(l-t, X^{\pi}_l, m_l, a)\pi(l, X^{\pi}_l, a)da \right) dl + F(t,X^{\pi}_T, m_T)\right].
\end{equation*} 
Here, $J^{\pi, m}(t,x) = V^{\pi, m}(t,t,x)$ represents the payoff functional for entering the game at $(t,x)$, while $V^{\pi, m}(t,s,x)$ represents the payoff reevaluated at a future time-state pair $(s,x)$. 

For the rest of the paper, we use the notation
\begin{equation*}
 \tilde{f}(t,x,m,\varpi):= \int_U f(t,x,m,a)\varpi(da).
\end{equation*}
for any generic function $f(t, x, m, a): [0,T]\times \R^d \times \mathscr{P}_2(\R^d)\times U\rightarrow \R$ and any generic distribution $\varpi\in \mathscr{P}(U)$.

\begin{Definition}
 The set of admissible policies, denoted by $\Ac$, is defined as the collection of all Borel measurable relaxed feedback policies $\pi: [0,T] \times \R^d \rightarrow \mathscr{P}(U)$. 
\end{Definition}

\begin{Assumption}\label{assume.r}
There exist constants $K_1,K_2,\eta>0$ such that the following conditions hold for all $(t,x,m,a) \in [0,T]\times\mathbb{R}^d\times\mathscr{P}_2(\mathbb{R}^d)\times U$:
\bi
\item[(i)] \textbf{Drift coefficient:}
The function $b:[0,T]\times\R^d\times\Pc_2(\R^d)\times U\to\R^d$
is Borel measurable and uniformly bounded. Moreover, 
$$
|b(t,x,m,a)-b(t,x,\rho,a)|\leq K_2\Wc_2(m,\rho),\quad \forall m,\rho\in\Pc_2(\R^d).
$$
\item[(ii)] \textbf{Diffusion coefficient:}
The function $ \sigma:[0,T]\times\R^d\times\Pc_2(\R^d)\to\R^{d\times m}$
is Borel measurable and uniformly bounded. Moreover,
$$
\begin{cases}
|\sigma(t,x,m)-\sigma(s,x,m)|\leq K_1|t-s|^{\alpha/2},\quad \forall t,s\in[0,T],\\
|\sigma(t,x,m)-\sigma(t,y,\rho)|\leq K_2\big(|x-y|+\Wc_2(m,\rho)\big),\quad \forall	x,y\in\R^d,\quad \forall m,\rho\in\Pc_2(\R^d),\\
 \eta|\xi|^2 \leq \xi \sigma\sigma^T(t,x,m) \xi^T, \quad \forall \xi \in \mathbb{R}^d \setminus \{0\}.
\end{cases}
$$
	
\item[(iii)] \textbf{Running reward:} The function $r:[0,T]\times\R^d\times\Pc_2(\R^d)\times U\to\R$ is Borel measurable in all variables and continuously differentiable in its first variable. Moreover,
$$
\begin{cases}
\sup_{(t,x,m,a)\in [0,T]\times \R^d\times \Pc_2(\R^d)\times U}\big(|r(t,x,m,a)|+|\partial_t r(t,x,m,a)|\big)\leq K_1,\\
|r(t,x,m,a)-r(t,x,\rho,a)|+|\partial_t r(t,x,m,a)-\partial_t r(t,x,\rho,a)|\leq K_2\Wc_2(m,\rho),\quad \forall m,\rho\in\Pc_2(\R^d).
\end{cases}
$$
	
\item[(iv)] \textbf{Terminal reward:} The function $F:[0,T]\times\R^d\times\Pc_2(\R^d)\to\R$ is Borel measurable in all variables, and 
$$
\begin{cases}
\|F(\cdot,\cdot,m)\|_{\Cc^{1,2}_\alpha([0,T]\times\R^d)}\leq K_1,\\
|F(t,x,m)-F(t,x,\rho)|+|\partial_tF(t,x,m)-\partial_tF(t,x,\rho)|\leq K_2\Wc_2(m,\rho),\quad \forall m,\rho\in\Pc_2(\R^d).
\end{cases}
$$
	\ei
\end{Assumption}

\begin{Remark}
    Under the conditions of $b$ and $\sigma$ in Assumption \ref{assume.r}, for any admissible policy $\pi \in \Ac$, any measure flow $m \in \mathscr{M}_\nu$, and any initial condition $(t,x) \in[0,T) \times \R^d$, the controlled SDE
    \begin{equation*}
        dX^\pi_s = \tilde{b}(s, X^\pi_s, m_s, \pi(s, X^\pi_s)) ds + \sigma(s, X^\pi_s, m_s) dW_s, \quad X^\pi_t = x, \quad s \in [t,T],
    \end{equation*}
    admits a unique strong solution. Moreover, the boundedness conditions on $r$ and $F$ ensure that $J^{\pi,m}(t,x)$ is always finite.
\end{Remark}

We give the following definition of relaxed equilibrium for the time-inconsistent MFG problem.
\begin{Definition}\label{def:equi.relaxed}
 A pair $(\pi^*, m^*)$, where $\pi^*\in\Ac$  and $m^*\in\mathscr{M}_{\nu}$, is called a (relaxed) mean-field equilibrium if the following conditions are satisfied:
 \begin{itemize}  
  \item[(a)]{\textit{(Equilibrium response in the intra-personal game)}} For any $(t,x)\in[0,T)\times \R^d$ and $\pi'\in\mathcal{A}$,
   \be\label{eq:def.equipi}
   \limsup_{\epsilon\to 0^+} \frac{J^{\pi'\otimes_{t,\epsilon} \pi^*, m^*}(t,x)- J^{\pi^*, m^*}(t,x)}{\epsilon}\leq 0,
  \ee
  where 
   \be\label{eq:def.pipast}
   \pi'\otimes_{t,\epsilon} \pi^*(s,x)=\begin{cases}
    \pi'(s,x), & (s,x)\in[t,t+\epsilon]\times\R^d,\\
    \pi^*(s,x), & otherwise. 
   \end{cases}
  \ee
  
  \item[(b)]{\textit{(Consistency condition on population aggregation)}} $m^*_t=\operatorname{law}(X^{*}_t)$ for all $t\in[0,T]$, where 
   \be\label{eq:def.equim}
   dX^{*}_t=\tilde{b}(t,X^{*}_t,m^*_t,\pi^*(t,X^{*}_t))dt+\sigma(t,X^{*}_t,m^*_t)dW_t,\quad X^*_0\sim\nu.
  \ee
 \end{itemize} 
\end{Definition}

\section{Existence of Equilibrium by a Set-Valued Fixed Point Argument}\label{sec:fixedpoint}

In this section, we establish the existence of an equilibrium directly in the time-inconsistent MFG. We first introduce the topology for relaxed feedback policies and record the uniform estimates for the value functions and population flows. We then prove the continuity properties required for the set-valued fixed-point argument.

\begin{Assumption}\label{assume.lipsa.U}
For every $(t,x,m)\in [0,T]\times \R^d\times \Pc_2(\R^d)$, the functions $a\mapsto b(t,x,m,a)$ and $a\mapsto r(t,x,m,a)$ are continuous on $U$.
\end{Assumption}

Fix a function $h\in C(\R^d)$ satisfying $h(x)>0$ for all $x\in\R^d$ and $\int_{\R^d}h(x)dx=1$. We equip $\Ac$ with the following stable topology: $\pi^n\to\pi$ if
 \be\label{eq:policy.topology}
\int_0^T\int_{\R^d}h(x)\left(\int_U\phi(t,x,a)\pi^n(t,x,da)\right)dxdt
\to
\int_0^T\int_{\R^d}h(x)\left(\int_U\phi(t,x,a)\pi(t,x,da)\right)dxdt
\ee
for every function $\phi:[0,T]\times\R^d\times U\to\R$ which is measurable in $(t,x)$, continuous in $a$ for a.e. $(t,x)$, and satisfies
$$
\int_0^T\int_{\R^d}h(x)\max_{a\in U}|\phi(t,x,a)|dxdt<\infty.
$$
Equivalently, we identify $\pi\in\Ac$ with the finite Young measure $h(x)dtdx\,\pi(t,x,da)$.

\begin{Lemma}\label{lm:policy.compact}
The policy space $\Ac$, endowed with the topology \eqref{eq:policy.topology}, is a compact convex subset of a locally convex topological vector space.
\end{Lemma}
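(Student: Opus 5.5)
We identify $\Ac$ with a set of finite measures on the product space $\Theta:=[0,T]\times\R^d\times U$ and realize the topology \eqref{eq:policy.topology} as a weak topology on a dual space. Set $\lambda(dt,dx):=h(x)\,dt\,dx$, a finite measure on $[0,T]\times\R^d$ with total mass $T$. To each $\pi\in\Ac$ we associate the Young measure $Y_\pi(dt,dx,da):=\lambda(dt,dx)\,\pi(t,x,da)$. Its $(t,x)$-marginal is $\lambda$. Two policies equal $\lambda$-a.e. give the same Young measure, and since $h>0$ this is the same as equality $dt\,dx$-a.e. So, as usual, we regard $\Ac$ modulo $dt\,dx$-null modifications; this is harmless for everything downstream, because all quantities depend on $\pi$ only through a.e. values thanks to the nondegenerate diffusion. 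Conversely, by disintegration over the compact metric space $U$, every finite measure $Y$ on $\Theta$ with marginal $\lambda$ is of the form $Y_\pi$ for a Borel kernel $\pi$.

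Let $E$ be the space of Carathéodory integrands $\phi$ (measurable in $(t,x)$, continuous in $a$) with $\|\phi\|:=\int\max_{a}|\phi|\,d\lambda<\infty$, modulo null functions. This is the Bochner space $L^1(\lambda;C(U))$, a separable Banach space. Its dual is $L^\infty_{w^*}(\lambda;\mathcal M(U))$, the weak-$*$ measurable essentially bounded maps into signed Radon measures on $U$. The topology \eqref{eq:policy.topology} is exactly the weak-$*$ topology $\sigma(E^*,E)$ restricted to $\{\pi\}$. Here $\pi$ acts on $E$ by $\phi\mapsto\int\!\int_U\phi\,d\pi\,d\lambda$, with norm at most $1$. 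So $\Ac$ embeds in the locally convex space $(E^*,\sigma(E^*,E))$.

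Convexity is immediate: if $\pi_1,\pi_2$ are Borel kernels into $\Pc(U)$, then $\theta\pi_1+(1-\theta)\pi_2$ is again one, and the embedding is affine.

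For compactness, $\Ac$ lies in the closed unit ball of $E^*$, which is weak-$*$ compact by Banach--Alaoglu. It therefore suffices to show that $\Ac$ is weak-$*$ closed. The main step is to characterize $\Ac$ by linear constraints tested against $E$. An element $\mu\in E^*$ lies in $\Ac$ if and only if two conditions hold. First, $\langle\mu,\phi\rangle\ge0$ for all $\phi\ge0$ in $E$. Second, $\langle\mu,g\otimes 1\rangle=\int g\,d\lambda$ for all $g\in L^1(\lambda)$. Each condition is preserved under weak-$*$ limits, so the set they define is closed.

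To verify this characterization, take $\mu$ satisfying both conditions. Testing with $\phi=g\otimes\psi$, where $g\ge0$ ranges over a countable dense family and $\psi\ge0$ over a countable dense subset of $C(U)$, shows that $\mu(t,x)$ is a positive measure of mass $1$ for $\lambda$-a.e. $(t,x)$. Modifying $\mu$ on a null set, setting it equal to a fixed $\delta_{a_0}$ there, gives a Borel kernel into $\Pc(U)$. Borel measurability of $(t,x)\mapsto\mu(t,x)$ into $\Pc(U)$ follows from weak-$*$ measurability together with separability of $C(U)$.

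Alternatively, one can avoid duality theory and argue with Young measures directly. The set of positive measures on $\Theta$ with fixed $(t,x)$-marginal $\lambda$ is tight, because $U$ is compact and $\lambda$ is a finite Radon measure. By Prokhorov's theorem it is therefore compact in the narrow topology. By the classical result of Balder (or Castaing--Raynaud de Fitte--Valadier) on Young measures, narrow convergence with fixed marginal coincides with the stable topology. This gives the same conclusion, with $\Ac$ sitting in the space of signed measures on $\Theta$ equipped with the weak topology generated by the Carathéodory integrands, which is locally convex.
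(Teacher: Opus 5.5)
Your proof is correct, but it takes a more self-contained functional-analytic route than the paper. The paper identifies $\pi$ with the Young measure $h(x)\,dt\,dx\,\pi(t,x,da)$. It then cites the Young-measure compactness theorem (Warga, Ch.~IV) to get a convergent subsequence from every sequence in $\Ac$. Finally it remarks that the topology \eqref{eq:policy.topology} is generated by linear functionals, so $\Ac$ embeds in a locally convex space of signed kernels.

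You instead make the ambient space explicit as $(E^*,\sigma(E^*,E))$ with $E=L^1(\lambda;C(U))$ and get compactness from Banach--Alaoglu. Everything then reduces to weak-$*$ closedness of $\Ac$. You prove this by describing $\Ac$ through the positivity and marginal constraints and recovering a kernel from an element of $L^\infty_{w^*}(\lambda;\mathcal M(U))$.

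Your route buys two things the paper leaves implicit.
\begin{itemize}
\item It gives genuine compactness directly. The paper's sequential statement upgrades to compactness only because $E$ is separable, which makes the topology on the bounded set $\Ac$ metrizable.
\item Your quotient by $dt\,dx$-null modifications is exactly what makes the ambient space Hausdorff, as the Kakutani--Fan--Glicksberg theorem requires. Without it, two policies differing on a null set are topologically indistinguishable. Your remark that everything downstream depends on $\pi$ only through a.e.\ values, via Krylov-type estimates under nondegenerate diffusion, correctly justifies this quotient.
\end{itemize}

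The price is reliance on the duality $L^1(\lambda;C(U))^*=L^\infty_{w^*}(\lambda;\mathcal M(U))$ where the paper relies on a Young-measure citation. One small caveat: weak-$*$ measurability of the recovered kernel is a priori with respect to the $\lambda$-completed $\sigma$-algebra. A Borel version therefore exists only after a further null modification, which is harmless in your quotient.

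Your alternative via Prokhorov plus the Balder/Jacod--M\'emin equivalence of narrow and stable topologies for a fixed marginal is essentially the paper's argument with a different reference.
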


\begin{proof}
Convexity is immediate. For compactness, identify each $\pi\in\Ac$ with the Young measure
$$
\Lambda^\pi(dt,dx,da):=h(x)dtdx\,\pi(t,x,da)
$$
on $[0,T]\times\R^d\times U$. All these measures have the same first marginal $h(x)dtdx$. Since $U$ is compact, the compactness theorem for Young measures (see, e.g., \cite[Chapter IV]{warga2014optimal}) implies that every sequence $(\pi^n)_{n\in\N}\subset\Ac$ admits a subsequence and a Borel measurable kernel $\pi^\infty:[0,T]\times\R^d\to\Pc(U)$ such that \eqref{eq:policy.topology} holds with $\pi=\pi^\infty$. The topology is generated by the family of continuous linear functionals appearing in \eqref{eq:policy.topology}; therefore $\Ac$ can be embedded in the corresponding locally convex space of signed kernels.
\end{proof}

For the fixed-point theorem, we also record a locally convex realization of the topology on the compact flow set. View a flow $m$ as a curve of finite signed measures and, on the linear span of such curves, consider the locally convex topology generated by the seminorms
$$
p_\varphi(m):=\sup_{t\in[0,T]}\left|\int_{\R^d}\varphi(x)m_t(dx)\right|,\qquad \varphi\in C_b(\R^d).
$$

For $\pi\in\Ac$ and $m\in\mathscr{M}_\nu$, let $\mu^{\pi,m}$ denote the population flow induced when all agents use $\pi$ under the input population flow $m$, i.e.,
$$
\mu^{\pi,m}_t=\operatorname{law}(X^{\pi,m}_t),\qquad
dX^{\pi,m}_t=\tilde b(t,X^{\pi,m}_t,m_t,\pi(t,X^{\pi,m}_t))dt+\sigma(t,X^{\pi,m}_t,m_t)dW_t,\quad X^{\pi,m}_0=\xi\sim\nu.
$$

We next give the uniform estimates, which do not require any regularity of policies in $(t,x)$.

\begin{Proposition}\label{prop:uniform.est}
Let Assumption \ref{assume.r} hold and suppose $\nu\in\Pc_{2+\kappa}(\R^d)$. There exist finite constants $A^*,C^*>0$, depending only on the constants in Assumption \ref{assume.r}, $T$, and the $(2+\kappa)$-moment of $\nu$, such that for every $\pi\in\Ac$ and $m\in\mathscr{M}_\nu^{\kappa,C^*}$,
 \be\label{eq:uniform.Vm}
\|V^{\pi,m}\|_{\widetilde{\Cc}^{0,1}_{\alpha,[0,T]}}
\vee
\|V^{\pi,m}\|_{\widehat W^{1,2}_{p,[0,T]}}
\vee
\|J^{\pi,m}\|_{\Cc^{0,1}_{\alpha}([0,T]\times\R^d)}
\leq A^*,\qquad
\mu^{\pi,m}\in\mathscr{M}_\nu^{\kappa,C^*}.
\ee
Moreover, for each fixed $t\in[0,T]$, $V^{\pi,m}(t,\cdot,\cdot)$ is the unique strong solution of
\begin{align}
\partial_sV^{\pi,m}(t,s,x)&+\frac12\tr\left((\sigma\sigma^T)(s,x,m_s)D_x^2V^{\pi,m}(t,s,x)\right)
+\tilde b(s,x,m_s,\pi(s,x))\cdot D_xV^{\pi,m}(t,s,x)\notag\\
&+\tilde r(s-t,x,m_s,\pi(s,x))=0,\quad (s,x)\in[t,T)\times\R^d,\label{eq:linear.V}\\
V^{\pi,m}(t,T,x)&=F(t,x,m_T).\notag
\end{align}
There also exists a constant $C>0$, independent of $(\pi,m)$, such that
 \be\label{eq:uniform.tlip}
|D_xV^{\pi,m}(t_1,s,x)-D_xV^{\pi,m}(t_2,s,x)|\leq C|t_1-t_2|
\ee
for all $t_1,t_2\in[0,T]$ and $(s,x)\in[0,T]\times\R^d$ with $s\geq t_1\vee t_2$.
\end{Proposition}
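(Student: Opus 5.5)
The proof splits into three parts: a priori regularity of the linear equation \eqref{eq:linear.V} for each fixed $t$; the $t$-derivative and the Lipschitz bound \eqref{eq:uniform.tlip}; and the moment and time-regularity estimates for $\mu^{\pi,m}$ that determine $C^*$.

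\emph{Step 1 (the PDE for fixed $t$).} Fix $t$, $\pi\in\Ac$ and $m\in\mathscr{M}_\nu^{\kappa,C^*}$. The coefficients of \eqref{eq:linear.V} have the following properties.
\begin{itemize}
\item The diffusion $a(s,x):=\sigma\sigma^T(s,x,m_s)$ is uniformly elliptic and bounded. It is uniformly continuous in $x$, by the Lipschitz bound in Assumption \ref{assume.r}(ii).
\item It is H\"older in $s$, because $\Wc_2(m_s,m_{s'})\le (C^*)^{1/2}|s-s'|^{1/2}$ on $\mathscr{M}_\nu^{\kappa,C^*}$. The modulus depends on $C^*$, but $C^*$ is fixed in Step 3 from data only, so there is no circularity.
\item The first-order coefficient $\tilde b(s,x,m_s,\pi(s,x))$ is merely bounded and measurable, with bound independent of $\pi$.
\item The source term $\tilde r(s-t,\cdot)$ is bounded by $K_1$, and the terminal datum $F(t,\cdot,m_T)$ is bounded in $\Cc^{1,2}_\alpha$.
\end{itemize}
The $L^p$ theory for nondivergence parabolic equations with VMO/continuous leading coefficients and bounded drift (Krylov) then applies, localized on the cylinders $[s,s+1]\times B_1(x)$. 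It yields a unique strong solution $w\in W^{1,2}_{p,\mathrm{loc}}$ with $\|w\|_{W^{1,2,\ul}_p}$ bounded by a constant depending only on the data. One can, for instance, treat $\tilde b\cdot Dw$ as a right-hand side and absorb it using interpolation and a short-time iteration.

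Since $p=\frac{d+2}{1-\alpha}>d+2$, parabolic Sobolev embedding gives $w,D_xw\in\Cc_\alpha$ with a uniform bound. The It\^o--Krylov formula, valid for $W^{1,2}_{p,\mathrm{loc}}$ functions and nondegenerate diffusions with bounded drift, identifies $w$ with the probabilistic $V^{\pi,m}(t,\cdot,\cdot)$; this identification also gives uniqueness within the class of bounded strong solutions.

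\emph{Step 2 ($t$-dependence and \eqref{eq:uniform.tlip}).} Formally differentiate \eqref{eq:linear.V} in $t$. The function $\partial_tV$ solves the same linear equation with source $-\partial_t\tilde r(s-t,\cdot)$ (bounded by $K_1$) and terminal datum $\partial_tF(t,\cdot,m_T)$ (bounded in $\Cc^{0,2}_\alpha$).

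To make this rigorous, apply Step 1 to the difference quotient $(V(t+h,\cdot)-V(t,\cdot))/h$ on $[t+h,T]$. Its data are uniformly bounded by the mean value theorem, and it converges to the solution of the differentiated equation, since uniqueness holds for every limit. This gives the $\partial_t$ parts of $\widetilde{\Cc}^{0,1}_\alpha$ and $\widehat W^{1,2}_p$. The Lipschitz bound \eqref{eq:uniform.tlip} follows by integrating $\partial_tD_xV$ in $t$ and using $\|\partial_tD_xV\|_\infty\le A^*$.

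For $J(t,x)=V(t,t,x)$, H\"older continuity in $(t,x)$ of $J$ and $D_xJ$ follows from the uniform H\"older bound of $V$ and $D_xV$ in $(s,x)$, combined with the Lipschitz dependence of $V$ and $D_xV$ on $t$.

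\emph{Step 3 (the flow $\mu^{\pi,m}$).} Here boundedness of $b$ and $\sigma$ does all the work. By Burkholder--Davis--Gundy,
\[
\E|X_t|^{2+\kappa}\le C(\E|\xi|^{2+\kappa}+1),\qquad \E|X_t-X_s|^2\le C|t-s| \quad (|t-s|\le T),
\]
where $C$ depends only on $\|b\|_\infty,\|\sigma\|_\infty,T,\kappa$. By \eqref{ineq:w2}, $\Wc_2^2(\mu_t,\mu_s)\le C|t-s|$. Choose $C^*$ to be the maximum of these constants, independently of $m$ because of the uniform bounds; then $\mu^{\pi,m}\in\mathscr{M}_\nu^{\kappa,C^*}$. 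With $C^*$ fixed, $A^*$ is fixed in Steps 1--2.

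The main obstacle is Step 1: obtaining $W^{1,2}_p$ bounds on the unbounded domain that are uniform over merely measurable drifts. The constants must depend only on $\|b\|_\infty$ and on the modulus of continuity of $\sigma\sigma^T$, never on the regularity of $\pi$. I would first localize via cutoff functions, apply the interior and terminal estimates, and control the lower-order terms by the global bound $\|V\|_\infty\le K_1T+K_1$ together with interpolation.
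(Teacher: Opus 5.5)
Your proposal is correct and follows essentially the same route as the paper. The argument has the same ingredients: uniform ellipticity and H\"older bounds on $\sigma\sigma^T(\cdot,\cdot,m_\cdot)$ (using the $\Wc_2$-time regularity built into $\mathscr{M}_\nu^{\kappa,C^*}$) together with a merely bounded drift feed a localized Krylov $W^{1,2}_p$ estimate and parabolic Sobolev embedding; the same estimate is applied to $\partial_tV$; the bound on $J$ and \eqref{eq:uniform.tlip} come from splitting the $t$- and $(s,x)$-increments; and $C^*$ is fixed from BDG bounds depending only on $\|b\|_\infty,\|\sigma\|_\infty$. The one minor difference is how you justify the equation for $\partial_tV$: you use difference quotients in the PDE plus uniqueness, whereas the paper differentiates the stochastic representation directly, which is immediate since $X^{\pi,m}$ does not depend on $t$.
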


\begin{proof}
 Throughout the proof, let $A_0$ be a generic positive constant depending only on $d,\alpha,T$, the constants in Assumption \ref{assume.r}, and the $(2+\kappa)$-moment of $\nu$, but independent of $\pi\in\Ac$ and $m\in\mathscr{M}_{\nu}^{\kappa,C^*}$. The constant $A_0$ may vary from line to line.
 
 \textbf{Step 1. }
 We first verify the regularity of the coefficients before invoking the $W^{1,2}_p$ estimate. Fix $(s_1,x),(s_2,y)\in[0,T]\times\R^d$ such that $|s_1-s_2|+|x-y|^2\leq1$. By Assumption \ref{assume.r}(ii), with the measure argument frozen at $m_{s_1}$, 
 \begin{align*}
  |\sigma(s_1,x,m_{s_1})-\sigma(s_2,y,m_{s_2})|&\leq
  |\sigma(s_1,x,m_{s_1})-\sigma(s_2,y,m_{s_1})| +|\sigma(s_2,y,m_{s_1})-\sigma(s_2,y,m_{s_2})|\\
  &\leq K_1(|s_1-s_2|+|x-y|^2)^{\alpha/2} +K_2\Wc_2(m_{s_1},m_{s_2}).
 \end{align*}
 Since $m\in\mathscr{M}_{\nu}^{\kappa,C^*}$,
 $$
 \Wc_2(m_{s_1},m_{s_2})
 \leq \sqrt{C^*}|s_1-s_2|^{1/2}
 \leq \sqrt{C^*}(|s_1-s_2|+|x-y|^2)^{\alpha/2},
 $$
 where the last inequality follows from $|s_1-s_2|+|x-y|^2\leq1$ and $0<\alpha<1$. Therefore,
  \be\label{eq:refined.sigma.holder}
  \|\sigma(\cdot,\cdot,m_\cdot)\|_{\Cc_\alpha([0,T]\times\R^d)}
  \leq A_0\sqrt{C^*}.
 \ee
 Then by the uniform boundedness of $\sigma$ and $|AA^T-BB^T|\leq(|A|+|B|)|A-B|$, we also obtain
  \be\label{eq:refined.ss.holder}
  \|(\sigma\sigma^T)(\cdot,\cdot,m_\cdot)\|_{\Cc_\alpha([0,T]\times\R^d)}
  \leq A_0\sqrt{C^*}.
 \ee
 In particular, the leading coefficient in \eqref{eq:linear.V} is uniformly elliptic and has a $\Cc_\alpha$ norm bounded independently of $(\pi,m)$.
 
 We emphasize that no regularity of $\pi$ in $(s,x)$ is imposed. Consequently,
 the averaged drift $\tilde b(s,x,m_s,\pi(s,x))$ need not be H\"older continuous. This does not affect the $W^{1,2}_p$ estimate used below: the leading coefficient satisfies \eqref{eq:refined.ss.holder}, while the lower-order drift is merely required to be uniformly bounded. By Assumption \ref{assume.r},
  \be\label{eq:refined.br.bound}
  \|\tilde b(\cdot,\cdot,m_\cdot,\pi(\cdot,\cdot))\|_{L^\infty([0,T]\times\R^d)}
  \vee
  \sup_{t\in[0,T]}
  \|\tilde r(\cdot-t,\cdot,m_\cdot,\pi(\cdot,\cdot))\|_{L^\infty([t,T]\times\R^d)}
  \leq A_0.
 \ee
 
 \textbf{Step 2.} Next we estimate the induced population flow.
 For any $\pi\in\Ac$ and $m\in\mathscr{M}_{\nu}^{\kappa,C^*}$, let $X^{\pi,m}$ be the process defining $\mu^{\pi,m}$. By the uniform boundedness of $b$ and $\sigma$, for any $0\leq s<t\leq T$,
 \begin{align*}
  \E|X^{\pi,m}_t-X^{\pi,m}_s|^2 &\leq 2\E\left|\int_s^t \tilde b(u,X^{\pi,m}_u,m_u,\pi(u,X^{\pi,m}_u))du\right|^2+ 2\E\left|\int_s^t\sigma(u,X^{\pi,m}_u,m_u)dW_u\right|^2\\
  &\leq2\|b\|_\infty^2(t-s)^2+2\|\sigma\|_\infty^2(t-s) \leq A_0|t-s|.
 \end{align*}
 Similarly, the BDG inequality gives
 $$
 \sup_{t\in[0,T]}\E|X^{\pi,m}_t|^{2+\kappa}
 \leq A_0\big(\E|\xi|^{2+\kappa}+1\big).
 $$
 It follows that
  \be\label{eq:refined.mu.est}
  \begin{cases}
   \displaystyle
   \sup_{t\in[0,T]}\int_{\R^d}|x|^{2+\kappa}\mu_t^{\pi,m}(dx)  =\sup_{t\in[0,T]}\E|X_t^{\pi,m}|^{2+\kappa}\leq A_0,\\ 
   \displaystyle
   \Wc_2^2(\mu_t^{\pi,m},\mu_s^{\pi,m})   \leq\E|X_t^{\pi,m}-X_s^{\pi,m}|^2   \leq A_0|t-s|.
  \end{cases}
 \ee
 Increasing $C^*$ if necessary, \eqref{eq:refined.mu.est} yields $\mu^{\pi,m}\in\mathscr{M}_{\nu}^{\kappa,C^*}$ uniformly in $(\pi,m)$.
 
 \textbf{Step 3.} We now provide regularities estimates for $V^{\pi,m}$. Fix $t\in[0,T]$. From the stochastic representation and the boundedness of $r$ and $F$,
  \be\label{eq:refined.Vzero}
  \|V^{\pi,m}(t,\cdot,\cdot)\|_{\Cc^0([t,T]\times\R^d)} \leq T\|r\|_\infty+\|F\|_\infty\leq A_0.
 \ee
 Recall that $p=(d+2)/(1-\alpha)$. By \eqref{eq:refined.ss.holder}, the uniform ellipticity in Assumption \ref{assume.r}(ii), and \eqref{eq:refined.br.bound}, the local $W^{1,2}_p$ estimate with truncation arguments (see, e.g., \cite[Theorem 5.2.10]{krylov2008lectures}) is applicable to \eqref{eq:linear.V}. For any $(s_0,x_0)\in[t,T]\times\R^d$,
 \begin{align}
  \|V^{\pi,m}(t,\cdot,\cdot)\|_{W^{1,2}_p(D_T(s_0,x_0))}
  &\leq A_0\Big(  \|V^{\pi,m}(t,\cdot,\cdot)\|_{L^p(D_{T+1}(s_0,x_0))}
  +\|\tilde r(\cdot-t,\cdot,m_\cdot,\pi(\cdot,\cdot))\|_{L^p(D_{T+1}(s_0,x_0))}\notag\\
&\qquad \qquad  +\|F(t,\cdot,m_T)\|_{W^{2,p}(B_{T+1}(x_0))}  \Big)
  \leq A_0. \label{eq:refined.Vsob}
 \end{align}
 Here and below the constant in the Sobolev estimate is uniform over $(\pi,m)$ because the ellipticity constant, the $\Cc_\alpha$ norm of $(\sigma\sigma^T)(\cdot,\cdot,m_\cdot)$, and the $L^\infty$ norm of the lower-order drift are uniformly bounded by \eqref{eq:refined.ss.holder}--\eqref{eq:refined.br.bound}. By the parabolic Sobolev embedding (see, e.g., \cite[Lemma 3.3 in Chapter II]{ladyzhenskaia1968linear}),
 $$
 \|V^{\pi,m}(t,\cdot,\cdot)\|_{\Cc^{0,1}_\alpha(D_T(s_0,x_0))} \leq A_0
 \|V^{\pi,m}(t,\cdot,\cdot)\|_{W^{1,2}_p(D_T(s_0,x_0))} \leq A_0.
 $$
 Taking the supremum over $(s_0,x_0)\in[t,T]\times\R^d$ yields
  \be\label{eq:refined.Vspatial}
  \|V^{\pi,m}(t,\cdot,\cdot)\|_{\Cc^{0,1}_\alpha([t,T]\times\R^d)}
  \vee  \|V^{\pi,m}(t,\cdot,\cdot)\|_{W^{1,2,\ul}_p([t,T]\times\R^d)}
  \leq A_0.
 \ee
 
 We next control the dependence on the initial preference parameter $t$. By Assumption \ref{assume.r}, differentiation of the stochastic representation with respect to the first argument gives
 \begin{align}
  W^{\pi,m}(t,s,x):=\partial_tV^{\pi,m}(t,s,x)
  =\E_{s,x}\bigg[  -\int_s^T  \partial_t\tilde r(l-t,X_l^{\pi,m},m_l,\pi(l,X_l^{\pi,m}))dl  +\partial_tF(t,X_T^{\pi,m},m_T)  \bigg]. \label{eq:refined.Wrep}
 \end{align}
 For every fixed $t$, $W^{\pi,m}(t,\cdot,\cdot)$ satisfies, in the strong sense,
 $$
 \begin{aligned}
  \partial_sW^{\pi,m}(t,s,x)
  &+\frac12\tr\left((\sigma\sigma^T)(s,x,m_s)D_x^2W^{\pi,m}(t,s,x)\right)
  +\tilde b(s,x,m_s,\pi(s,x))\cdot D_xW^{\pi,m}(t,s,x)\\
  &-\partial_t\tilde r(s-t,x,m_s,\pi(s,x))=0,\qquad
  W^{\pi,m}(t,T,x)=\partial_tF(t,x,m_T).
 \end{aligned}
 $$
 The coefficient estimates established in Step 1 are unchanged, and $\partial_t r,\partial_tF$ satisfy the same uniform bounds required by Assumption \ref{assume.r}. Applying exactly the estimate leading to \eqref{eq:refined.Vspatial} to $W^{\pi,m}$ gives
  \be\label{eq:refined.West}
  \sup_{t\in[0,T]}
  \left(
  \|W^{\pi,m}(t,\cdot,\cdot)\|_{\Cc^{0,1}_\alpha([t,T]\times\R^d)}
  \vee
  \|W^{\pi,m}(t,\cdot,\cdot)\|_{W^{1,2,\ul}_p([t,T]\times\R^d)}  \right)
  \leq A_0.
 \ee
 Combining \eqref{eq:refined.Vspatial} and \eqref{eq:refined.West} gives
 $
 \|V^{\pi,m}\|_{\widetilde{\Cc}^{0,1}_{\alpha,[0,T]}}
 \vee \|V^{\pi,m}\|_{\widehat W^{1,2}_{p,[0,T]}} \leq A_0.
$
 
 Finally, let $J^{\pi,m}(t,x)=V^{\pi,m}(t,t,x)$. For $(s,x),(t,y)\in[0,T]\times\R^d$ with $|s-t|+|x-y|^2\leq1$, assume without loss of generality that $t\leq s$. Then
 \begin{align*}
  \frac{|J^{\pi,m}(s,x)-J^{\pi,m}(t,y)|}
  {(|s-t|+|x-y|^2)^{\alpha/2}}
  &\leq \frac{|V^{\pi,m}(s,s,x)-V^{\pi,m}(t,s,x)|}{(|s-t|+|x-y|^2)^{\alpha/2}}
  +\frac{|V^{\pi,m}(t,s,x)-V^{\pi,m}(t,t,y)|}{(|s-t|+|x-y|^2)^{\alpha/2}}\\
  &\leq \sup_{u\in[0,T]}\|\partial_tV^{\pi,m}(u,\cdot,\cdot)\|_{\Cc^0([u,T]\times\R^d)}+ \sup_{u\in[0,T]}[V^{\pi,m}(u,\cdot,\cdot)]_{\Cc_\alpha([u,T]\times\R^d)}.
 \end{align*}
Notice that $D_x\partial_tV^{\pi,m}=\partial_tD_xV^{\pi,m}$, the same decomposition
 applied to $D_xJ^{\pi,m}$ yields
 $$
 \|J^{\pi,m}\|_{\Cc^{0,1}_\alpha([0,T]\times\R^d)}
 \leq
 \sup_{t\in[0,T]} \|\partial_tV^{\pi,m}(t,\cdot,\cdot)\|_{\Cc^{0,1}([t,T]\times\R^d)}
 + \sup_{t\in[0,T]} \|V^{\pi,m}(t,\cdot,\cdot)\|_{\Cc^{0,1}_\alpha([t,T]\times\R^d)}
 \leq A_0.
 $$
 Increasing $A^*$ if necessary proves the first assertion in \eqref{eq:uniform.Vm}. Moreover, by \eqref{eq:refined.West},
 \begin{align*}
  |D_xV^{\pi,m}(t_1,s,x)-D_xV^{\pi,m}(t_2,s,x)|
  &=\left|\int_{t_2}^{t_1} D_x\partial_tV^{\pi,m}(u,s,x)du\right|\leq A_0|t_1-t_2|,
 \end{align*}
and  proof is completed.
\end{proof}

Fix the constants $A^*,C^*$ in Proposition \ref{prop:uniform.est} and define
$$
\Mc_0:=\left\{w\in\Cc^{0,1}_{\alpha}([0,T]\times\R^d):\|w\|_{\Cc^{0,1}_{\alpha}([0,T]\times\R^d)}\leq A^*\right\},
\qquad
\Ec_0:=\Ac\times\mathscr{M}_\nu^{\kappa,C^*}.
$$
By Proposition \ref{prop:uniform.est}, $J^{\pi,m}\in\Mc_0$ for every $(\pi,m)\in\Ec_0$. Thus $\Mc_0$ records the uniform compactness class of the diagonal value functions, while the fixed-point map itself acts on $\Ec_0$. 

\subsection{Continuity of the value function}

We first record the coefficient convergence implied by convergence in $\Ec_0$.

\begin{Lemma}\label{lm:weakstar.coeff}
Suppose Assumptions \ref{assume.r} and \ref{assume.lipsa.U} hold. Let $(\pi^n,m^n)_{n\in\N\cup\{\infty\}}\subset\Ec_0$ satisfy $\pi^n\to\pi^\infty$ in \eqref{eq:policy.topology} and $d(m^n,m^\infty)\to0$. Then
 \be\label{eq:weakstar}
\tilde b(\cdot,\cdot,m^n_\cdot,\pi^n(\cdot,\cdot))
\to^*
\tilde b(\cdot,\cdot,m^\infty_\cdot,\pi^\infty(\cdot,\cdot))
\quad\text{in }L^\infty([0,T]\times\R^d),
\ee
and, for every fixed $t\in[0,T]$,
$$
\tilde r(\cdot-t,\cdot,m^n_\cdot,\pi^n(\cdot,\cdot))
\to^*\tilde r(\cdot-t,\cdot,m^\infty_\cdot,\pi^\infty(\cdot,\cdot)) \quad\text{in }L^\infty([t,T]\times\R^d).
$$
Moreover,
$$
\|(\sigma\sigma^T)(\cdot,\cdot,m^n_\cdot)-(\sigma\sigma^T)(\cdot,\cdot,m^\infty_\cdot)\|_{L^\infty([0,T]\times\R^d)}\to0.
$$
\end{Lemma}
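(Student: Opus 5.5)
We split each coefficient difference into a population part and a policy part. For the drift, fix a test function $g\in L^1([0,T]\times\R^d)$. We write
\begin{align*}
\int_0^T\!\!\int_{\R^d} g\,\big(\tilde b(s,x,m^n_s,\pi^n)-\tilde b(s,x,m^\infty_s,\pi^\infty)\big)dxds
&=\int_0^T\!\!\int_{\R^d} g\,\big(\tilde b(s,x,m^n_s,\pi^n)-\tilde b(s,x,m^\infty_s,\pi^n)\big)dxds\\
&\quad+\int_0^T\!\!\int_{\R^d} g\,\big(\tilde b(s,x,m^\infty_s,\pi^n)-\tilde b(s,x,m^\infty_s,\pi^\infty)\big)dxds.
\end{align*}
By the Lipschitz bound in Assumption \ref{assume.r}(i), the first term is bounded by $K_2\,d(m^n,m^\infty)\|g\|_{L^1}$. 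This tends to $0$ because $\pi^n(s,x)$ is a probability measure, so the $W_2$-Lipschitz bound passes through the $a$-integral uniformly in $n$.

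For the second term we use the stable topology \eqref{eq:policy.topology}. Set
$$\phi(s,x,a):=\frac{g(s,x)}{h(x)}\,b(s,x,m^\infty_s,a),$$
applied componentwise. Since $h>0$ is continuous, $\phi$ is well defined and measurable in $(s,x)$. We take $(s,x)\mapsto b(s,x,m^\infty_s,a)$ to be jointly measurable, which follows from Borel measurability of $b$ and continuity of $s\mapsto m^\infty_s$. By Assumption \ref{assume.lipsa.U}, $\phi$ is continuous in $a$ for every $(s,x)$. Finally,
$$\int\!\!\int h\max_a|\phi|\,dxds\le \|b\|_\infty\|g\|_{L^1}<\infty.$$
Then $\int\!\!\int h\int_U\phi\,d\pi^n\,dxds$ equals the second term with $\pi^n$, so \eqref{eq:policy.topology} gives convergence to $0$. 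This proves \eqref{eq:weakstar}, since weak-$*$ convergence in $L^\infty$ is convergence against every $g\in L^1$. The bound $\|\tilde b\|_\infty\le\|b\|_\infty$ is uniform in $n$, so no density argument is needed.

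For $\tilde r$ with fixed $t$, we repeat the argument on $[t,T]\times\R^d$. We extend $g\in L^1([t,T]\times\R^d)$ by zero and use $\phi(s,x,a)=\mathbf 1_{\{s\ge t\}}g(s,x)h(x)^{-1}r(s-t,x,m^\infty_s,a)$. Continuity in $a$ again comes from Assumption \ref{assume.lipsa.U}, and the population part uses the $W_2$-Lipschitz bound in Assumption \ref{assume.r}(iii).

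For the diffusion term, Assumption \ref{assume.r}(ii) gives
$$|\sigma(s,x,m^n_s)-\sigma(s,x,m^\infty_s)|\le K_2\Wc_2(m^n_s,m^\infty_s)\le K_2\,d(m^n,m^\infty).$$
Combining this with $|AA^T-BB^T|\le(|A|+|B|)|A-B|$ and the boundedness of $\sigma$ yields
$$\|(\sigma\sigma^T)(\cdot,\cdot,m^n_\cdot)-(\sigma\sigma^T)(\cdot,\cdot,m^\infty_\cdot)\|_{L^\infty}\le 2\|\sigma\|_\infty K_2\,d(m^n,m^\infty)\to0.$$

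The only delicate step is the policy part. There one must check that $\phi$ is an admissible test function in \eqref{eq:policy.topology}: joint measurability in $(s,x)$ after the measure argument is frozen along the curve $m^\infty$, and the integrability weighted by $h$. Both hold by the reasons given above, so the weak-$*$ limit identifies with the averaged coefficient under $\pi^\infty$.
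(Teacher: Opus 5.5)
Your proposal is correct and follows the paper's proof essentially step for step. It uses the same split into a $\Wc_2$-Lipschitz population part, bounded by $K_2\,d(m^n,m^\infty)\|g\|_{L^1}$, and a policy part handled by testing \eqref{eq:policy.topology} against $g(s,x)h(x)^{-1}b(s,x,m^\infty_s,a)$, with the diffusion term coming directly from Assumption \ref{assume.r}(ii). Your extra details (joint measurability via continuity of $s\mapsto m^\infty_s$, the zero extension for $\tilde r$, and the explicit constant $2\|\sigma\|_\infty K_2$) fill in steps the paper calls ``identical'' or ``direct.''
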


\begin{proof}
Fix $\phi\in L^1([0,T]\times\R^d)$. Write
$$
\begin{aligned}
&\int_{[0,T]\times\R^d}
\left[\tilde b(t,x,m_t^n,\pi^n(t,x))-\tilde b(t,x,m_t^\infty,\pi^\infty(t,x))\right]\phi(t,x)dtdx\\
=&\int_{[0,T]\times\R^d}\int_U[b(t,x,m_t^n,a)-b(t,x,m_t^\infty,a)]\pi^n(t,x,da)\phi(t,x)dtdx\\
&+\int_{[0,T]\times\R^d}\int_U b(t,x,m_t^\infty,a)[\pi^n(t,x,da)-\pi^\infty(t,x,da)]\phi(t,x)dtdx.
\end{aligned}
$$
The absolute value of the first term is bounded by
$
K_2d(m^n,m^\infty)\|\phi\|_{L^1([0,T]\times\R^d)},
$
which tends to zero as $n\to\infty$. For the second term, use \eqref{eq:policy.topology} with
$$
(t,x,a)\mapsto \frac{\phi(t,x)}{h(x)}b(t,x,m_t^\infty,a).
$$
This is an admissible test function because $h>0$, $a\mapsto b(t,x,m_t^\infty,a)$ is continuous, and
$$
\int_0^T\int_{\R^d}h(x)\max_{a\in U}\left|\frac{\phi(t,x)}{h(x)}b(t,x,m_t^\infty,a)\right|dxdt
\leq K_1\|\phi\|_{L^1}.
$$
This proves \eqref{eq:weakstar}. The proof for $r$ is identical. The convergence of the diffusion coefficient follows directly from Assumption \ref{assume.r}(ii).
\end{proof}

\begin{Lemma}[Stability of the value function]\label{lm:V.stability}
Under the assumptions of Lemma \ref{lm:weakstar.coeff}, for every $N\in\N$ and $0\leq\beta<\alpha$,
 \be\label{eq:V.stability}
\|V^{\pi^n,m^n}-V^{\pi^\infty,m^\infty}\|_{\Cc^{0,1}_{\beta,[0,T]\times B_N(0)}}\to0.
\ee
Furthermore, for every $t\in[0,T]$, every $\phi\in L^q(D_N(t,0))$, and $0\leq2l+|a|_{l^1}\leq2$,
$$
\int_{D_N(t,0)}\phi(s,x)\left(\partial_s^lD_x^aV^{\pi^n,m^n}(t,s,x)-\partial_s^lD_x^aV^{\pi^\infty,m^\infty}(t,s,x)\right)dsdx\to0.
$$
In particular,
$
D_xV^{\pi^n,m^n}(t,t,x)\to D_xV^{\pi^\infty,m^\infty}(t,t,x)
$
locally uniformly in $(t,x)$.
\end{Lemma}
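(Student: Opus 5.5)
The plan is a compactness/uniqueness argument. Fix $t$ first and write $V^n(t,\cdot,\cdot):=V^{\pi^n,m^n}(t,\cdot,\cdot)$. By Proposition \ref{prop:uniform.est}, the family $\{V^n(t,\cdot,\cdot)\}_n$ is bounded in $W^{1,2}_p$ on every $D_N(t,0)$ and in $\Cc^{0,1}_\alpha([t,T]\times\R^d)$. Moreover, $\{W^n:=\partial_tV^n\}$ satisfies the same bounds, so the family is equi-Lipschitz in the preference variable $t$ in the $\Cc^{0,1}$ sense. By Arzel\`a--Ascoli (compact embedding of $\Cc^{0,1}_\alpha$ into $\Cc^{0,1}_\beta$ on bounded domains for $\beta<\alpha$) and weak compactness of $W^{1,2}_p$ (with $q=p/(p-1)$ the dual exponent), every subsequence has a further subsequence along which the following hold. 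First, $V^n(t,\cdot,\cdot)\to \bar V(t,\cdot,\cdot)$ in $\Cc^{0,1}_\beta(D_N(t,0))$ for all $N$. Second, $\partial_s^lD_x^aV^n\rightharpoonup\partial_s^lD_x^a\bar V$ weakly in $L^p(D_N(t,0))$. The theorem then follows by identifying $\bar V(t,\cdot,\cdot)=V^{\pi^\infty,m^\infty}(t,\cdot,\cdot)$, because the limit is then independent of the subsequence and the whole sequence converges.

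The identification consists in passing to the limit in \eqref{eq:linear.V}, tested against $\phi\in C_c^\infty$ on $[t,T)\times\R^d$, one term at a time.
\begin{itemize}
\item The terms $\partial_sV^n\phi$ converge by the weak $L^p$ convergence.
\item For the second-order term, write $\tr(\sigma\sigma^T(m^n)D^2_xV^n)$. Lemma \ref{lm:weakstar.coeff} gives $\sigma\sigma^T(m^n)\to\sigma\sigma^T(m^\infty)$ uniformly, and $D_x^2V^n$ converges weakly in $L^p$ with uniformly bounded norm, so this strong-times-weak product converges.
\item The first-order term $\tilde b^n\cdot D_xV^n\phi$ is the key product. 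Here $\tilde b^n$ converges only weak-$*$ in $L^\infty$ (Lemma \ref{lm:weakstar.coeff}), but $D_xV^n\phi\to D_x\bar V\phi$ uniformly on the compact support, hence strongly in $L^1$. A weak-$*$ times strong-$L^1$ product converges.
\item The term $\tilde r^n\phi$ converges by Lemma \ref{lm:weakstar.coeff}.
\item The terminal condition $F(t,x,m^n_T)\to F(t,x,m^\infty_T)$ passes to the limit by the uniform convergence in $\Cc^{0}$ up to $s=T$.
\end{itemize}
Hence $\bar V(t,\cdot,\cdot)$ is a bounded strong $W^{1,2}_{p,\mathrm{loc}}$ solution of \eqref{eq:linear.V} with data $(\pi^\infty,m^\infty)$. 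Uniqueness of bounded strong solutions, via the It\^o--Krylov formula (valid since $p>d+2$ and the coefficients are bounded and nondegenerate), gives $\bar V=V^{\pi^\infty,m^\infty}$.

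Finally, I upgrade to uniformity in $t$, giving the norm \eqref{eq:V.stability}. The main difficulty is that the $t$-Hölder norm involves a supremum over $t\in[0,T]$ while the above is pointwise in $t$. Proposition \ref{prop:uniform.est} gives a uniform Lipschitz bound in $t$ for $V^n$ and $D_xV^n$ in $\Cc^0$, via the bound on $W^n$ and \eqref{eq:uniform.tlip}. I would therefore cover $[0,T]$ by a finite $\delta$-grid, apply the pointwise-in-$t$ convergence at grid points, and control the remainder by the Lipschitz bound. For the Hölder seminorm of order $\beta$, I would interpolate between the $\Cc^0$ convergence and the uniform $\Cc_\alpha$ bound: $[w]_\beta\le C\|w\|_{\Cc^0}^{1-\beta/\alpha}[w]_\alpha^{\beta/\alpha}$, applied to the difference $D_xV^n-D_xV^\infty$ on $[t,T]\times B_N$. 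A mild nuisance is that the domain $[t,T]$ moves with $t$; I would handle it by extending $V^n(t,\cdot,\cdot)$ for $s<t$ trivially, or simply by comparing on $[t',T]\supset[t,T]$ at the nearest grid point $t'\le t$.

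The local uniform convergence of $D_xV^n(t,t,x)$ then follows from the $\Cc^{0,1}_{0,[0,T]\times B_N}$ convergence, which gives uniform convergence of $D_xV^n(t,s,x)$ over $s\ge t$, $x\in B_N$, restricted to $s=t$.
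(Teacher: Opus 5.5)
Your proposal is correct and follows essentially the paper's route. Uniform $\Cc^{0,1}_\alpha$ and $W^{1,2}_p$ bounds give compactness, the drift term passes to the limit as a weak-$*$ times uniformly convergent product, the limit is identified with $V^{\pi^\infty,m^\infty}$ by It\^o--Krylov uniqueness, and the subsequence principle gives convergence of the whole sequence. The one difference is how you get uniformity in $t$: the paper applies Arzel\`a--Ascoli directly to $t\mapsto V^n(t,\cdot,\cdot)$ with values in $\Cc^{0,1}_\beta$ before identifying the limit, whereas you identify the limit at each fixed $t$ and then upgrade using a finite grid, the equi-Lipschitz bound in $t$, and interpolation. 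The two are equivalent, and your version handles the moving domain $[t,T]$ more explicitly.
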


\begin{proof}
 Write
 $$
 V^n:=V^{\pi^n,m^n},\qquad n\in\N\cup\{\infty\}.
 $$
 We prove the result by showing that every subsequence of $(V^n)_{n\in\N}$ contains a further subsequence converging to $V^\infty$ in all the senses stated in the lemma.
 
 \textbf{Step 1.} We first study the convergence of the derivatives..
 By Proposition \ref{prop:uniform.est}, for every $N\in\N$,
 \begin{align}
  &\sup_{n\in\N}
  \left(\sup_{t\in[0,T]}
  \|V^n(t,\cdot,\cdot)\|_{\Cc^{0,1}_\alpha(D_N(t,0))}
  + \sup_{t\in[0,T]}  \|\partial_tV^n(t,\cdot,\cdot)\|_{\Cc^{0,1}_\alpha(D_N(t,0))}
  \right) \leq A^*, \label{eq:refined.stab.1}\\
  &\sup_{n\in\N}  \left( \sup_{t\in[0,T]} \|V^n(t,\cdot,\cdot)\|_{W^{1,2}_p(D_N(t,0))}
  + \sup_{t\in[0,T]} \|\partial_tV^n(t,\cdot,\cdot)\|_{W^{1,2}_p(D_N(t,0))}  \right)
  \leq A_NA^*, \label{eq:refined.stab.2}
 \end{align}
 where $A_N<\infty$ depends only on $d,p,N$.
 
 Fix $0\leq\beta<\alpha$ and start with an arbitrary subsequence of $(V^n)_{n\in\N}$, which we still denote by $(V^n)_{n\in\N}$. We repeat the following construction for $N=T,T+1,\ldots$. Since $\Cc^{0,1}_\alpha(D_N(t,0))$ is compactly embedded in $\Cc^{0,1}_\beta(D_N(t,0))$, and \eqref{eq:refined.stab.1} implies
 $$
 \|V^n(t_1,\cdot,\cdot)-V^n(t_2,\cdot,\cdot)\|_{\Cc^{0,1}(D_N)} \leq A^*|t_1-t_2|,
 $$
 the mappings $t\mapsto V^n(t,\cdot,\cdot)$ are uniformly equicontinuous with values in $\Cc^{0,1}_\beta$ on every bounded cylinder. Hence, by the Arzel\`a--Ascoli theorem, there exist a subsequence, denoted at this stage by $(V^{N,n})_{n\in\N}$, and a function $u^N$ on $\Delta_{[0,T]}\times B_N(0)$ such that
  \be\label{eq:refined.stab.3}
  \lim_{n\to\infty}  \sup_{t\in[0,T]}  \|V^{N,n}(t,\cdot,\cdot)-u^N(t,\cdot,\cdot)\|_{\Cc^{0,1}_\beta(D_N(t,0))}=0.
 \ee
 By \eqref{eq:refined.stab.2} and Banach--Alaoglu, after taking a further subsequence, for every fixed $t\in[0,T]$ there exist $w(t,\cdot,\cdot),w^{ij}(t,\cdot,\cdot)\in L^p(D_N(t,0))$ such that, for every $\phi\in L^q(D_N(t,0))$,
 \begin{align}
  \lim_{n\to\infty}
  \int_{D_N(t,0)}\phi(s,x)\partial_sV^{N,n}(t,s,x)dsdx
  &=  \int_{D_N(t,0)}\phi(s,x)w(t,s,x)dsdx, \label{eq:refined.stab.4}\\
  \lim_{n\to\infty}  \int_{D_N(t,0)}\phi(s,x)\partial_{x_ix_j}^2V^{N,n}(t,s,x)dsdx
  &=  \int_{D_N(t,0)}\phi(s,x)w^{ij}(t,s,x)dsdx. \label{eq:refined.stab.5}
 \end{align}
 We now identify these weak limits. Take $\phi\in C_c^\infty(D_N(t,0))$. By \eqref{eq:refined.stab.3}, $D_xV^{N,n}\to D_xu^N$ uniformly on $D_N(t,0)$, and therefore
 \begin{align*}
  -\int_{D_N(t,0)}\phi\,w^{ij}\,dsdx
  &=-\lim_{n\to\infty}  \int_{D_N(t,0)}\phi\,\partial_{x_ix_j}^2V^{N,n}\,dsdx\\
  &=\lim_{n\to\infty}  \int_{D_N(t,0)}\partial_{x_i}\phi\,  \partial_{x_j}V^{N,n}\,dsdx=\int_{D_N(t,0)}  \partial_{x_i}\phi\,\partial_{x_j}u^N\,dsdx.
 \end{align*}
 Thus $w^{ij}=\partial_{x_ix_j}^2u^N$ in the weak sense. Similarly, using the uniform convergence of $V^{N,n}$ to $u^N$,
 $$
 -\int_{D_N(t,0)}\phi\,w\,dsdx
 =-\lim_{n\to\infty}\int_{D_N(t,0)}\phi\,\partial_sV^{N,n}\,dsdx
 = \int_{D_N(t,0)}\partial_s\phi\,u^N\,dsdx,
 $$
 so $w=\partial_su^N$. Together with the strong convergence of $V^{N,n}$ and $D_xV^{N,n}$, this proves that, for every $0\leq2l+|a|_{l^1}\leq2$ and every $\phi\in L^q(D_N(t,0))$,
  \be\label{eq:refined.stab.6}
  \lim_{n\to\infty}  \int_{D_N(t,0)} \phi(s,x)\left( \partial_s^lD_x^aV^{N,n}(t,s,x) -\partial_s^lD_x^au^N(t,s,x)  \right)dsdx=0.
 \ee
 Proceeding in $N$ with nested subsequences, the limit functions are consistent on overlapping cylinders: $u^{N+1}=u^N$ on $\Delta_{[0,T]}\times B_N(0)$. Hence they define a global function $v^\infty$ on $\Delta_{[0,T]}\times\R^d$. Taking the diagonal subsequence, still denoted by $(V^n)$, we obtain, for every $K\in\N$,
 \begin{align}
  &\lim_{n\to\infty}
  \|V^n-v^\infty\|_{\Cc^{0,1}_{\beta,[0,T]\times B_K(0)}}=0, \label{eq:refined.stab.7}\\
  &\lim_{n\to\infty}  \int_{D_K(t,0)}  \phi(s,x)\left(  \partial_s^lD_x^aV^n(t,s,x)  -\partial_s^lD_x^av^\infty(t,s,x) \right)dsdx=0 \label{eq:refined.stab.8}
 \end{align}
 for every fixed $t\in[0,T]$, $\phi\in L^q(D_K(t,0))$, and $0\leq2l+|a|_{l^1}\leq2$.
 
 We also retain the uniformly local Sobolev estimate in the limit. Fix $(t_0,x_0)\in[t,T]\times\R^d$. By the dual representation of the $L^p$ norm and \eqref{eq:refined.stab.8},
 \begin{align*}
  \|\partial_s^lD_x^av^\infty(t,\cdot,\cdot)\|_{L^p(D_1(t_0,x_0))}
  &=  \sup_{\|\phi\|_{L^q(D_1(t_0,x_0))}\leq1}  \int_{D_1(t_0,x_0)}  \phi\,\partial_s^lD_x^av^\infty\,dsdx\\
  &\leq  \liminf_{n\to\infty}  \|\partial_s^lD_x^aV^n(t,\cdot,\cdot)\|_{L^p(D_1(t_0,x_0))}
  \leq A^*.
 \end{align*}
 Taking the supremum over $(t_0,x_0)$ and then over $t$ gives $\|v^\infty\|_{\widetilde W^{1,2}_{p,[0,T]}}\leq A^*$.
 
 \textbf{Step 2.} We now show that $v^\infty$ solves the equation associated with $(\pi^\infty,m^\infty)$. Fix $t\in[0,T)$, $(t_0,x_0)\in[t,T)\times\R^d$, and $\phi\in L^q(D_1(t_0,x_0))$. Set
 $$
 \tilde b^n(s,x):=\tilde b(s,x,m_s^n,\pi^n(s,x)),\qquad
 \tilde r_t^n(s,x):=\tilde r(s-t,x,m_s^n,\pi^n(s,x)),
 $$
 and define $\tilde b^\infty,\tilde r_t^\infty$ analogously. Since $V^n$ satisfies \eqref{eq:linear.V},
 \begin{align}
  0=\lim_{n\to\infty} \int_{D_1(t_0,x_0)} \bigg[
  &\partial_sV^n +\frac12\tr\left((\sigma\sigma^T)(s,x,m_s^n)D_x^2V^n\right)  +\tilde b^n\cdot D_xV^n+\tilde r_t^n  \bigg]\phi\,dsdx. \label{eq:refined.stab.9}
 \end{align}
 The time derivative converges by \eqref{eq:refined.stab.8}. For the diffusion term, write
 \begin{align*}
  &\int_{D_1}  \tr\left((\sigma\sigma^T)(s,x,m_s^n)D_x^2V^n  -(\sigma\sigma^T)(s,x,m_s^\infty)D_x^2v^\infty\right)\phi\,dsdx\\
  &=  \int_{D_1}  \tr\left((\sigma\sigma^T)(s,x,m_s^\infty)  (D_x^2V^n-D_x^2v^\infty)\right)\phi\,dsdx\\
  &\quad+ \int_{D_1}  \tr\left(  [(\sigma\sigma^T)(s,x,m_s^n)-(\sigma\sigma^T)(s,x,m_s^\infty)]  D_x^2V^n  \right)\phi\,dsdx.
 \end{align*}
 The first term tends to zero by \eqref{eq:refined.stab.8}. For the second term, Assumption \ref{assume.r}(ii) and
 $d(m^n,m^\infty)\to0$ yield
 $$
 \|(\sigma\sigma^T)(\cdot,\cdot,m_\cdot^n) -(\sigma\sigma^T)(\cdot,\cdot,m_\cdot^\infty)\|_{\Cc^0} \leq A_0d(m^n,m^\infty)\to0,
 $$
 while $(D_x^2V^n)$ is uniformly bounded in $L^p(D_1)$; hence H\"older's inequality gives convergence of the second term to zero. For the drift term, decompose
 \begin{align}
  &\int_{D_1}  (\tilde b^n\cdot D_xV^n-\tilde b^\infty\cdot D_xv^\infty)\phi\,dsdx\notag\\
  &=  \int_{D_1}
  \tilde b^n\cdot(D_xV^n-D_xv^\infty)\phi\,dsdx
  +  \int_{D_1}  (\tilde b^n-\tilde b^\infty)\cdot D_xv^\infty\,\phi\,dsdx. \label{eq:refined.stab.10}
 \end{align}
 The first term on the right-hand side tends to zero because $\tilde b^n$ is uniformly bounded and $D_xV^n\to D_xv^\infty$ uniformly on $D_1$. Since $D_xv^\infty\,\phi\in L^1(D_1)$, the second term tends to zero by Lemma \ref{lm:weakstar.coeff}. The reward term satisfies
 $$
 \int_{D_1}(\tilde r_t^n-\tilde r_t^\infty)\phi\,dsdx\to0
 $$
 by the same weak-$*$ convergence. Passing to the limit in
 \eqref{eq:refined.stab.9}, we obtain
  \be\label{eq:refined.stab.limitPDE}
  \partial_sv^\infty(t,s,x)  +\frac12\tr\left((\sigma\sigma^T)(s,x,m_s^\infty)D_x^2v^\infty(t,s,x)\right)
  +\tilde b^\infty(s,x)\cdot D_xv^\infty(t,s,x)  +\tilde r_t^\infty(s,x)=0
 \ee
 a.e. on $[t,T)\times\R^d$.
 
 The terminal condition also passes to the limit. Indeed, \eqref{eq:refined.stab.7} gives $V^n(t,T,x)\to v^\infty(t,T,x)$ locally uniformly in $x$, whereas Assumption \ref{assume.r}(iv) and $d(m^n,m^\infty)\to0$ give
 $$
 F(t,x,m_T^n)\to F(t,x,m_T^\infty)
 $$
 locally uniformly in $(t,x)$. Since $V^n(t,T,x)=F(t,x,m_T^n)$, $ v^\infty(t,T,x)=F(t,x,m_T^\infty).$
 
 \textbf{Step 3.} Now we show $v^\infty= V^{\pi^\infty,m^\infty}$.
 Fix $(t,s,x)\in\Delta_{[0,T]}\times\R^d$ and let $X^\infty$ solve
 $$
 dX_l^\infty = \tilde b(l,X_l^\infty,m_l^\infty,\pi^\infty(l,X_l^\infty))dl +\sigma(l,X_l^\infty,m_l^\infty)dW_l,\qquad X_s^\infty=x.
 $$
 By boundedness of the drift, uniform ellipticity, and the regularity of $\sigma$, this SDE admits a unique strong solution. By Krylov's estimate, the law of $X_l^\infty$ admits a density $p_{s,x}(l,y)\in L^q([s,T]\times\R^d)$. 
 Let $\rho_{s,N}:=\inf\{l\geq s:X_l^\infty\notin B_N(0)\}$. Since the coefficients are bounded,
 $$
 \sup_{l\in[s,T]}\E_{s,x}|X_l^\infty|^2<\infty,\quad
 \P_{s,x}(\rho_{s,N}\leq T)
 \leq\frac{1}{N^2} \E_{s,x}\left[\sup_{l\in[s,T]}|X_l^\infty|^2\right]\to0.
 $$
 Thus, for every $\varepsilon>0$, there exists $N_0$ such that, for
 $N\geq N_0$,
 \begin{align}
  &\E_{s,x}\left[  \left|v^\infty(t,T\wedge\rho_{s,N},X^\infty_{T\wedge\rho_{s,N}}) -F(t,X_T^\infty,m_T^\infty)\right| \mathbf{1}_{\{\rho_{s,N}<T\}}  \right]\notag\\
  &\quad+  \E_{s,x}\left[  \int_{\rho_{s,N}}^T  |\tilde r(l-t,X_l^\infty,m_l^\infty,\pi^\infty(l,X_l^\infty))|dl  \mathbf{1}_{\{\rho_{s,N}<T\}}  \right]\leq\varepsilon. \label{eq:refined.stab.tail}
 \end{align} 
 For $N\geq N_0$, apply the It\^o--Krylov formula to $v^\infty(t,l,X_l^\infty)$ on $[s,T\wedge\rho_{s,N}]$. Taking expectations, and using that the stopped transition density belongs to $L^q(D_N(s,0))$, we obtain
 \begin{align*}
  &\E_{s,x}\left[  v^\infty(t,T\wedge\rho_{s,N},X^\infty_{T\wedge\rho_{s,N}})  \right]-v^\infty(t,s,x)\\
  &=  \int_{D_N(s,0)}  \bigg[  \partial_sv^\infty(t,l,y)
  +\tilde b^\infty(l,y)\cdot D_xv^\infty(t,l,y)
  +\frac12\tr\left((\sigma\sigma^T)(l,y,m_l^\infty)D_x^2v^\infty(t,l,y)\right)  \bigg]p_{s,x}(l,y)dydl\\
  &=  -\int_{D_N(s,0)}  \tilde r(l-t,y,m_l^\infty,\pi^\infty(l,y))  p_{s,x}(l,y)dydl\\
  &=  -\E_{s,x}\left[  \int_s^{T\wedge\rho_{s,N}}  \tilde r(l-t,X_l^\infty,m_l^\infty,\pi^\infty(l,X_l^\infty))dl  \right],
 \end{align*}
 where the second equality follows from \eqref{eq:refined.stab.limitPDE}. Therefore,
 \begin{align*}
  v^\infty(t,s,x)
  &=  \E_{s,x}\left[  v^\infty(t,T\wedge\rho_{s,N},X^\infty_{T\wedge\rho_{s,N}})
  +\int_s^{T\wedge\rho_{s,N}}  \tilde r(l-t,X_l^\infty,m_l^\infty,\pi^\infty(l,X_l^\infty))dl  \right].
 \end{align*}
 Combining this identity with \eqref{eq:refined.stab.tail} and then letting $N\to\infty$ gives
 $$
 v^\infty(t,s,x)
 = \E_{s,x}\left[ \int_s^T \tilde r(l-t,X_l^\infty,m_l^\infty,\pi^\infty(l,X_l^\infty))dl
 +F(t,X_T^\infty,m_T^\infty) \right]
 = V^{\pi^\infty,m^\infty}(t,s,x).
 $$ 
 Thus every subsequence of $(V^n)$ has a further subsequence converging to the same limit $V^{\pi^\infty,m^\infty}$ in \eqref{eq:refined.stab.7}--\eqref{eq:refined.stab.8}. The standard subsequence principle gives convergence of the whole sequence, proving \eqref{eq:V.stability} and the weak convergence of all derivatives stated in the lemma. In particular,
 $$
 \sup_{\substack{t\in[0,T],\,x\in B_N(0)}} |D_xV^{\pi^n,m^n}(t,t,x)-D_xV^{\pi^\infty,m^\infty}(t,t,x)| \to0,\quad N\in\N.
 $$
\end{proof}

\subsection{Continuity of the population-flow update}

The weak-$*$ convergence \eqref{eq:weakstar} is not sufficient by itself to apply standard strong SDE stability. We therefore use a duality argument based on the Fokker--Planck equations.

\begin{Proposition}\label{prop:flow.continuity}
Suppose Assumptions \ref{assume.r} and \ref{assume.lipsa.U} hold. Let $(\pi^n,m^n)_{n\in\N\cup\{\infty\}}\subset\Ec_0$ satisfy $\pi^n\to\pi^\infty$ in \eqref{eq:policy.topology} and $d(m^n,m^\infty)\to0$. Then
 \be\label{eq:flow.continuity}
d(\mu^{\pi^n,m^n},\mu^{\pi^\infty,m^\infty})\to0.
\ee
\end{Proposition}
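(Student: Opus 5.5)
The proof uses compactness plus identification of the limit, with the identification done through duality against backward Kolmogorov equations.

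\textbf{Step 1: compactness.} By Proposition \ref{prop:uniform.est}, all flows $\mu^n:=\mu^{\pi^n,m^n}$ lie in the compact set $\mathscr{M}_\nu^{\kappa,C^*}$. It therefore suffices to show that every subsequence has a further subsequence whose $d$-limit equals $\mu^\infty:=\mu^{\pi^\infty,m^\infty}$. Fix a subsequence with $\mu^n\to\bar\mu$ in $d$. We will show that $\bar\mu_\tau=\mu^\infty_\tau$ for every $\tau\in(0,T]$. Since probability measures are determined by their integrals against $C_c^\infty(\R^d)$, it is enough to prove
$$\int\varphi\,d\mu^n_\tau\to\int\varphi\,d\mu^\infty_\tau\qquad\text{for each }\varphi\in C_c^\infty(\R^d).$$

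\textbf{Step 2: the dual function.} Let $u(s,x)$, for $s\in[0,\tau]$, be the strong solution of the backward equation with the \emph{limit} coefficients:
$$\partial_s u+\tfrac12\tr\big((\sigma\sigma^T)(s,x,m^\infty_s)D_x^2u\big)+\tilde b^\infty\cdot D_xu=0,\qquad u(\tau,\cdot)=\varphi.$$
Exactly as in Proposition \ref{prop:uniform.est}, $u$ is bounded in $\Cc^{0,1}_\alpha$ and in $W^{1,2,\ul}_p$, and $u(s,x)=\E_{s,x}\varphi(X^\infty_\tau)$. In particular $\int\varphi\,d\mu^\infty_\tau=\int u(0,\cdot)\,d\nu$.

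Next apply the It\^o--Krylov formula to $u(s,X^{n}_s)$, where $X^n=X^{\pi^n,m^n}$. This is justified because, by Krylov's estimate, the laws of $X^n_s$ have densities $p^n$ that are bounded in $L^q([\delta,\tau]\times B_N)$ uniformly in $n$; near $s=0$ one uses $\nu$ and stopping/localization as in Step 3 of Lemma \ref{lm:V.stability}, with tails controlled by the uniform moment bounds. Using the equation for $u$, we obtain
$$\int\varphi\,d\mu^n_\tau-\int u(0,\cdot)\,d\nu=\int_0^\tau\!\!\int\Big[(\tilde b^n-\tilde b^\infty)\cdot D_xu+\tfrac12\tr\big(((\sigma\sigma^T)^n-(\sigma\sigma^T)^\infty)D_x^2u\big)\Big]p^n\,dx\,ds.$$

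\textbf{Step 3: the diffusion term.} By Lemma \ref{lm:weakstar.coeff}, the diffusion difference tends to zero in $L^\infty$. Together with the uniform local $L^p$ bound on $D_x^2u$, the uniform $L^q$ bound on $p^n$ (by H\"older), and tail control, this term vanishes in the limit.

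\textbf{Step 4: the drift term (main obstacle).} Here $\tilde b^n-\tilde b^\infty\to0$ only weak-$*$, and it multiplies $p^n$, which also varies with $n$. A weak-$*$ limit times a weakly convergent sequence need not converge, so this is the crux.

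I would first try to upgrade $p^n\to\bar p$ (the density of $\bar\mu$) to \emph{strong} $L^1_{\mathrm{loc}}$ convergence. The densities solve the Fokker--Planck equations
$$\partial_s p^n=\tfrac12\partial_{ij}\big((\sigma\sigma^T)^n_{ij}p^n\big)-\nabla\cdot(\tilde b^np^n)$$
with bounded drifts and uniformly H\"older, uniformly elliptic diffusion. Standard parabolic estimates in divergence form (with $\sigma$ Lipschitz in $x$) should give uniform $L^2_{\mathrm{loc}}(H^1)$ bounds and $\partial_sp^n$ bounded in $L^2(H^{-1})$ on $[\delta,\tau]\times B_N$, hence strong local compactness by Aubin--Lions. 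Alternatively, one can use local H\"older bounds for the densities (Aronson-type Gaussian estimates). Either way, $p^n\to\bar p$ strongly in $L^1_{\mathrm{loc}}$ with a uniform $L^\infty$ bound away from $s=0$.

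Once this is in hand, split
$$(\tilde b^n-\tilde b^\infty)\cdot D_xu\,p^n=(\tilde b^n-\tilde b^\infty)\cdot D_xu\,(p^n-\bar p)+(\tilde b^n-\tilde b^\infty)\cdot D_xu\,\bar p.$$
The first piece tends to zero by boundedness of $\tilde b$ and $D_xu$ together with strong convergence of $p^n$. The second tends to zero by the weak-$*$ convergence of Lemma \ref{lm:weakstar.coeff}, since $D_xu\,\bar p\in L^1$. The strip $[0,\delta]$ is handled by the boundedness of all integrands after re-expressing it via expectations, giving a contribution of order $\delta$. Letting $n\to\infty$ and then $\delta\to0$ yields $\int\varphi\,d\bar\mu_\tau=\int u(0,\cdot)\,d\nu=\int\varphi\,d\mu^\infty_\tau$.

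Hence $\bar\mu=\mu^\infty$, and the subsequence principle gives \eqref{eq:flow.continuity}.
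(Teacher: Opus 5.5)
Your proposal is correct and follows essentially the same route as the paper. Both arguments use:
\begin{itemize}
\item subsequential compactness in $\mathscr{M}_\nu^{\kappa,C^*}$;
\item duality with the backward Kolmogorov equation driven by the limit coefficients, expanded by It\^o--Krylov along $X^n$;
\item for the diffusion error, uniform convergence of $\sigma\sigma^T$ together with Krylov's $L^q$ bound on the densities;
\item for the drift error, strong $L^1$ convergence of $p^n$ on $[\delta,\tau]\times\R^d$ from interior H\"older bounds for the divergence-form Fokker--Planck equation, paired with the weak-$*$ convergence of Lemma \ref{lm:weakstar.coeff} and an $O(\delta)$ bound on the initial strip.
\end{itemize}
The differences are cosmetic. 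You control spatial tails by tightness and take the limit to be the density of $\bar\mu$, whereas the paper upgrades locally uniform convergence to global $L^1$ via Scheff\'e's lemma and never needs to identify $p^*$. You also offer Aubin--Lions as an alternative to the H\"older estimates.
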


\begin{proof}
 Write
 $$
 \mu^n:=\mu^{\pi^n,m^n},\quad 
 \tilde b^n(t,x):=\tilde b(t,x,m_t^n,\pi^n(t,x)),\quad
 \sigma^n(t,x):=\sigma(t,x,m_t^n),
 $$
 for $n\in\N\cup\{\infty\}$. By Proposition \ref{prop:uniform.est}, $(\mu^n)_{n\in\N}\subset\mathscr{M}_\nu^{\kappa,C^*}$. Since $\mathscr{M}_\nu^{\kappa,C^*}$ is compact in $(\mathscr{M},d)$, every subsequence of $(\mu^n)$ admits a further subsequence, still denoted by $(\mu^n)$, and a flow $\bar\mu\in\mathscr{M}_\nu^{\kappa,C^*}$ such that $d(\mu^n,\bar\mu)\to0$. We prove that $\bar\mu=\mu^\infty:=\mu^{\pi^\infty,m^\infty}$.
 
 Fix $s\in(0,T]$ and $\phi\in C_c^2(\R^d)$. Consider the backward PDE
  \be\label{eq:refined.flow.backward}
  \begin{cases}
   \displaystyle
   \partial_tu(t,x)   +\frac12\tr\left(   (\sigma^\infty\sigma^{\infty,T})(t,x)D_x^2u(t,x)   \right)  
   +\tilde b^\infty(t,x)\cdot D_xu(t,x)=0,
   &(t,x)\in[0,s)\times\R^d,\\
     u(s,x)=\phi(x),&x\in\R^d.
  \end{cases}
 \ee
 By Assumption \ref{assume.r}, the uniform ellipticity and the $\Cc_\alpha$ regularity of $\sigma^\infty$ established in \eqref{eq:refined.sigma.holder}, equation \eqref{eq:refined.flow.backward} admits a unique strong solution $u\in W^{1,2}_p([0,s]\times\R^d)$. The parabolic Sobolev embedding gives $D_xu\in C([0,s]\times\R^d)$ and $\|D_xu\|_{L^\infty([0,s]\times\R^d)}<\infty$. Let $X^n$ solve
 $$
 dX_t^n=\tilde b^n(t,X_t^n)dt+\sigma^n(t,X_t^n)dW_t,\quad X_0^n=\xi\sim\nu.
 $$
 Applying the It\^o--Krylov formula to $u(t,X_t^n)$ and using
 \eqref{eq:refined.flow.backward}, we obtain
 \begin{align}
  \E[\phi(X_s^n)]-\E[u(0,\xi)]
  &=  \E\int_0^s  (\tilde b^n-\tilde b^\infty)(t,X_t^n)\cdot D_xu(t,X_t^n)dt \notag\\
  &\quad+  \frac12\E\int_0^s  \tr\left(  [(\sigma^n\sigma^{n,T})-(\sigma^\infty\sigma^{\infty,T})]  (t,X_t^n)D_x^2u(t,X_t^n)  \right)dt. \label{eq:refined.flow.Ito}
 \end{align}
 
 We next analyze the first term in the right-hand side of \eqref{eq:refined.flow.Ito}. For every $t>0$, uniform ellipticity implies that $X_t^n$ admits a density
 $p^n(t,\cdot)$. Fix $\tau\in(0,s)$. On
 $[\tau,s]\times\R^d$, $p^n$ is a generalized solution of the
 divergence-form equation
 \begin{align}
  \partial_tp^n  -\sum_{i=1}^d\partial_{x_i}\bigg(
  \frac12\sum_{j=1}^d  (\sigma^n\sigma^{n,T})_{ij}\partial_{x_j}p^n
  +  \Big[  \frac12\sum_{j=1}^d  \partial_{x_j}(\sigma^n\sigma^{n,T})_{ij}  -\tilde b_i^n \Big]p^n  \bigg)=0. \label{eq:refined.flow.FP}
 \end{align}
 The coefficients in \eqref{eq:refined.flow.FP} satisfy bounds which are uniform in $n$: $\sigma^n\sigma^{n,T}$ is uniformly elliptic, $\sigma^n$ is uniformly Lipschitz in $x$ by Assumption \ref{assume.r}(ii), and $\tilde b^n$ is uniformly bounded. Consequently, the local boundedness and interior H\"older estimates for generalized solutions of divergence-form parabolic equations imply that for every $\tau\in(0,s)$ and $R>0$ there exist $\gamma\in(0,1)$ and $C_{\tau,R}<\infty$, independent of $n$, such that
  \be\label{eq:refined.flow.localholder}
  \sup_{n\in\N} \|p^n\|_{\Cc_\gamma([\tau,s]\times B_R(0))}  \leq C_{\tau,R}.
 \ee
 By Arzel\`a--Ascoli and a diagonal argument, after passing to a further subsequence if necessary, there exists a nonnegative continuous function $p^*$ on $(0,s]\times\R^d$ such that
  \be\label{eq:refined.flow.localunif}
  p^n\to p^*  \quad\text{locally uniformly on }(0,s]\times\R^d.
 \ee
 Since the uniform moment estimate implies that $\{p^n(t,x)\,dt\,dx\}_{n\in\N}$ is tight on $[\tau,s]\times\R^d$, the locally uniform convergence in
 \eqref{eq:refined.flow.localunif} yields
$$
 \int_\tau^s\int_{\R^d}p^*(t,x) dxdt =
 \lim_{n\to\infty} \int_\tau^s\int_{\R^d}p^n(t,x) dxdt
 =s-\tau.
$$
Therefore, by Scheff\'e's lemma,
  \be\label{eq:refined.flow.L1}
 \|p^n-p^*\|_{L^1([\tau,s]\times\R^d)}\to0.
\ee
 
 Now decompose the first term in the right-hand side of \eqref{eq:refined.flow.Ito} as
 \begin{align}
  &\E\int_0^s  (\tilde b^n-\tilde b^\infty)(t,X_t^n)\cdot D_xu(t,X_t^n)dt \notag\\
  &=  \E\int_0^\tau  (\tilde b^n-\tilde b^\infty)(t,X_t^n)\cdot D_xu(t,X_t^n)dt 
 +  \int_\tau^s\int_{\R^d}  (\tilde b^n-\tilde b^\infty)(t,x)\cdot D_xu(t,x) p^*(t,x)dxdt \notag\\
  &\quad+  \int_\tau^s\int_{\R^d}  (\tilde b^n-\tilde b^\infty)(t,x)\cdot D_xu(t,x)  [p^n(t,x)-p^*(t,x)]dxdt. \label{eq:refined.flow.driftdec}
 \end{align}
 Because $\tilde b^n$ and $D_xu$ are uniformly bounded, the absolute value of the first term on the right-hand side is at most $A_0\tau$. For the second term, $D_xu\,p^*\mathbf{1}_{[\tau,s]}\in L^1([0,T]\times\R^d)$, and Lemma \ref{lm:weakstar.coeff} gives
 $$
 \int_\tau^s\int_{\R^d} (\tilde b^n-\tilde b^\infty)\cdot D_xu\,p^*\,dxdt\to0.
 $$
 By \eqref{eq:refined.flow.L1}, the third term is estimated by
$
 A_0\|D_xu\|_{L^\infty([0,T]\times \R^d)} \|p^n-p^*\|_{L^1([\tau,s]\times\R^d)}\to0.
$ Therefore,
  \be\label{eq:refined.flow.driftzero}
  \limsup_{n\to\infty}  \left|  \E\int_0^s  (\tilde b^n-\tilde b^\infty)(t,X_t^n)\cdot D_xu(t,X_t^n)dt  \right|  \leq A_0\tau.
 \ee
 Letting $\tau\downarrow0$ proves that the drift contribution tends to zero.
 
 For the diffusion contribution, Assumption \ref{assume.r}(ii) and $d(m^n,m^\infty)\to0$ imply
  \be\label{eq:refined.flow.sigmaconv}
  \|\sigma^n\sigma^{n,T}-\sigma^\infty\sigma^{\infty,T}\|_{L^\infty([0,s]\times\R^d)} \leq A_0d(m^n,m^\infty)\to0.
 \ee
 Krylov's estimate gives $\sup_n\|p^n\|_{L^q([0,s]\times\R^d)}<\infty$. Hence H\"older's inequality yields
 \begin{align*}
  &\left|  \E\int_0^s\tr\left(  [(\sigma^n\sigma^{n,T})-(\sigma^\infty\sigma^{\infty,T})]  (t,X_t^n)D_x^2u(t,X_t^n)  \right)dt  \right|\\
  &\quad\leq  \|\sigma^n\sigma^{n,T}-\sigma^\infty\sigma^{\infty,T}\|_\infty  \|D_x^2u\|_{L^p([0,s]\times\R^d)}  \|p^n\|_{L^q([0,s]\times\R^d)}  \to0. 
 \end{align*}

In sum, by passing to the limit in \eqref{eq:refined.flow.Ito}, we have
 $$
 \lim_{n\to\infty}\E[\phi(X_s^n)]=\E[u(0,\xi)].
 $$
 Applying the It\^o--Krylov formula to $u(t,X_t^\infty)$ and using \eqref{eq:refined.flow.backward} gives $\E[\phi(X_s^\infty)]=\E[u(0,\xi)]$. Therefore,
  \be\label{eq:refined.flow.weaklaw}
  \lim_{n\to\infty}\E[\phi(X_s^n)]  =  \E[\phi(X_s^\infty)],  \quad \phi\in C_c^2(\R^d).
 \ee
 On the other hand, $d(\mu^n,\bar\mu)\to0$ implies
 $$
 \lim_{n\to\infty} \E[\phi(X_s^n)] = \lim_{n\to\infty}\int_{\R^d}\phi(x)\mu_s^n(dx) = \int_{\R^d}\phi(x)\bar\mu_s(dx).
 $$
 Combining this with \eqref{eq:refined.flow.weaklaw} yields
 $$
 \int_{\R^d}\phi(x)\bar\mu_s(dx)
 = \int_{\R^d}\phi(x)\mu_s^\infty(dx), \quad \phi\in C_c^2(\R^d).
 $$
 Thus $\bar\mu_s=\mu_s^\infty$ for every $s\in(0,T]$, and the equality at $s=0$ follows from the common initial law $\nu$. Hence $\bar\mu=\mu^\infty$. Since every subsequence has the same unique subsequential limit, the whole sequence satisfies $d(\mu^n,\mu^\infty)\to0$.
\end{proof}

\subsection{The set-valued mapping and the existence of its fixed point}

For $(\pi,m)\in\Ec_0$ and $\varpi\in\Ac$, define
$$
\mathcal{G}(\pi,m;\varpi)
:=\int_0^T\int_{\R^d}h(x)
\int_U\left[b(t,x,m_t,a)\cdot D_xV^{\pi,m}(t,t,x)+r(0,x,m_t,a) \right]\varpi(t,x,da)dxdt.
$$
Define the set-valued mapping $\Phi:\Ec_0\to\Ec_0$ by
 \be\label{eq:Phi.direct}
\Phi(\pi,m)
:=\left\{(\varpi,\mu^{\pi,m}):\varpi\in\argmax_{\tilde\pi\in\Ac}\mathcal{G}(\pi,m;\tilde\pi)
\right\}.
\ee

\begin{Lemma}\label{lm:G.continuous}
Suppose Assumptions \ref{assume.r} and \ref{assume.lipsa.U} hold. The functional $(\pi,m,\varpi)\mapsto\mathcal{G}(\pi,m;\varpi)$ is jointly continuous on $\Ec_0\times\Ac$.
\end{Lemma}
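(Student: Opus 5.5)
We take sequences $(\pi^n,m^n,\varpi^n)\to(\pi^\infty,m^\infty,\varpi^\infty)$ in $\Ec_0\times\Ac$ and show $\mathcal{G}(\pi^n,m^n;\varpi^n)\to\mathcal{G}(\pi^\infty,m^\infty;\varpi^\infty)$. The space $\Ec_0\times\Ac$ is compact but not obviously metrizable, so for genuine joint continuity we would repeat the argument with nets; every ingredient below (uniform bounds, Arzel\`a--Ascoli limits, Lemma \ref{lm:V.stability}, tightness of $h$) passes to nets verbatim. We write $G^n(t,x,a):=b(t,x,m^n_t,a)\cdot D_xV^{\pi^n,m^n}(t,t,x)+r(0,x,m^n_t,a)$ and use the three-term split
\begin{align*}
\mathcal{G}(\pi^n,m^n;\varpi^n)-\mathcal{G}(\pi^\infty,m^\infty;\varpi^\infty)
&=\int_0^T\!\!\int_{\R^d}h\int_U (G^n-G^{n,\infty})\,\varpi^n(da)\,dxdt
+\int_0^T\!\!\int_{\R^d}h\int_U (G^{n,\infty}-G^\infty)\,\varpi^n(da)\,dxdt\\
&\quad+\int_0^T\!\!\int_{\R^d}h\int_U G^\infty\,(\varpi^n-\varpi^\infty)(da)\,dxdt,
\end{align*}
where $G^{n,\infty}$ is $G^n$ with $m^n$ replaced by $m^\infty$ in $b$ and $r$, but with $D_xV^{\pi^n,m^n}$ kept.

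For the first term, Assumption \ref{assume.r}(i),(iii) give
\[
|G^n-G^{n,\infty}|\le K_2\,d(m^n,m^\infty)\bigl(|D_xV^{\pi^n,m^n}(t,t,x)|+1\bigr).
\]
Proposition \ref{prop:uniform.est} bounds $|D_xJ|\le A^*$ uniformly, since $D_xV(t,t,x)=D_xJ(t,x)$. As $\int h=1$, this term is $O(d(m^n,m^\infty))\to0$.

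For the second term, $|G^{n,\infty}-G^\infty|\le\|b\|_\infty|D_xV^{\pi^n,m^n}(t,t,x)-D_xV^{\pi^\infty,m^\infty}(t,t,x)|$. By the last assertion of Lemma \ref{lm:V.stability}, this difference tends to $0$ uniformly on $[0,T]\times B_N(0)$ for each $N$, and it is bounded by $2A^*$ everywhere. We therefore split the integral into $B_N(0)$ and its complement. The inner part vanishes as $n\to\infty$, and the outer part is at most $2\|b\|_\infty A^*\,T\int_{|x|>N}h\,dx$, which is small for large $N$ because $h\in L^1$. This tail step is the only point where the lack of uniform global convergence matters.

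For the third term, we apply the definition of the stable topology \eqref{eq:policy.topology} with the fixed test function $\phi(t,x,a)=G^\infty(t,x,a)$. This $\phi$ is measurable in $(t,x)$ (because $b,r$ are Borel and $D_xV^{\pi^\infty,m^\infty}(t,t,x)$ is continuous) and continuous in $a$ by Assumption \ref{assume.lipsa.U}. It is also bounded by $\|b\|_\infty A^*+K_1$, so the integrability condition holds. Hence $\varpi^n\to\varpi^\infty$ makes the third term vanish. Summing the three estimates gives the claim.

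The main obstacle is really the second term, that is, the uniform-in-$(t,x)$ convergence of the diagonal gradient $D_xV^{\pi^n,m^n}(t,t,x)$ under only Young-measure convergence of $\pi^n$. Lemma \ref{lm:V.stability} already supplies this locally uniformly, so what remains is the routine tail control via $h$ and the uniform gradient bound from Proposition \ref{prop:uniform.est}.
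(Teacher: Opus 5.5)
Your proof is correct and is essentially the paper's argument. It uses the same three-way telescoping: Lipschitz dependence on the measure together with the uniform gradient bound $A^*$; locally uniform convergence of $D_xV^{\pi^n,m^n}(t,t,x)$ from Lemma \ref{lm:V.stability} plus integrability of $h$, which is the paper's dominated-convergence step made explicit; and the stable topology applied to a fixed bounded Carath\'eodory test function. You only take the measure and gradient replacements in the opposite order and fold the reward into the drift. Your caveat about nets is harmless but unnecessary: $\Ac$ carries the weak-$*$ topology of a bounded subset of the dual of the separable space $L^1(h\,dt\,dx;C(U))$, so it is metrizable and sequences suffice.
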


\begin{proof}
 Let
 $$
 (\pi^n,m^n,\varpi^n)\to (\pi^\infty,m^\infty,\varpi^\infty) \quad\text{in }\Ec_0\times\Ac.
 $$
 We prove
 $\mathcal{G}(\pi^n,m^n;\varpi^n)\to
 \mathcal{G}(\pi^\infty,m^\infty;\varpi^\infty)$.
 For notational convenience, set
 $$
 D_xJ^n(t,x):=D_xV^{\pi^n,m^n}(t,t,x),\quad
 D_xJ^\infty(t,x):=D_xV^{\pi^\infty,m^\infty}(t,t,x).
 $$
 By Lemma \ref{lm:V.stability},
 $D_xJ^n(t,x)\to D_xJ^\infty(t,x)$ for every $(t,x)$, locally uniformly,
 and Proposition \ref{prop:uniform.est} gives
  \be\label{eq:refined.G.gradbound}
  \sup_{n\in\N\cup\{\infty\}}  \|D_xJ^n\|_{L^\infty([0,T]\times\R^d)}  \leq A^*.
 \ee
 
 We first consider the drift part. Adding and subtracting the two
 intermediate terms in which, successively, the gradient and the measure
 argument are replaced, we have
 \begin{align}
  &\int_0^T\int_{\R^d}h(x)  \tilde b(t,x,m_t^n,\varpi^n(t,x))\cdot D_xJ^n(t,x)dxdt\notag\\
  &\quad- \int_0^T\int_{\R^d}h(x)  \tilde b(t,x,m_t^\infty,\varpi^\infty(t,x)) \cdot D_xJ^\infty(t,x)dxdt\notag\\
  &=:\text{I}_n+\text{II}_n+\text{III}_n, \label{eq:refined.G.decomp}
 \end{align}
 where
 \begin{align*}
  \text{I}_n
  &:=  \int_0^T\int_{\R^d}h(x)  \tilde b(t,x,m_t^n,\varpi^n(t,x))
  \cdot[D_xJ^n(t,x)-D_xJ^\infty(t,x)]dxdt,\\
  \text{II}_n
  &:=  \int_0^T\int_{\R^d}h(x)  \left[  \tilde b(t,x,m_t^n,\varpi^n(t,x))  -\tilde b(t,x,m_t^\infty,\varpi^n(t,x))  \right]\cdot D_xJ^\infty(t,x)dxdt,\\
  \text{III}_n
  &:= \int_0^T\int_{\R^d}h(x) \left[  \tilde b(t,x,m_t^\infty,\varpi^n(t,x)) -\tilde b(t,x,m_t^\infty,\varpi^\infty(t,x)) \right]\cdot D_xJ^\infty(t,x)dxdt.
 \end{align*}
 For $\text{I}_n$, pointwise convergence of the gradients, the uniform bounds
 on $b$ and \eqref{eq:refined.G.gradbound}, and
 $\int_{\R^d}h(x)dx=1$ allow us to apply dominated convergence:
 $$
 |\text{I}_n| \leq \|b\|_\infty \int_0^T\int_{\R^d} h(x)|D_xJ^n(t,x)-D_xJ^\infty(t,x)|dxdt \to0.
 $$
 For $\text{II}_n$, Assumption \ref{assume.r}(i) gives
 \begin{align*}
  |\text{II}_n|  &\leq K_2\int_0^T\int_{\R^d}  h(x)\Wc_2(m_t^n,m_t^\infty)  |D_xJ^\infty(t,x)|dxdt\leq K_2TA^*d(m^n,m^\infty)\to0.
 \end{align*}
 For $\text{III}_n$, define the Carath\'eodory test function
 $$
 \phi_b(t,x,a):=b(t,x,m_t^\infty,a)\cdot D_xJ^\infty(t,x).
 $$
 It is continuous in $a$, and by \eqref{eq:refined.G.gradbound},
 $$
 \int_0^T\int_{\R^d} h(x)\max_{a\in U}|\phi_b(t,x,a)|dxdt \leq T\|b\|_\infty A^*<\infty.
 $$
 Therefore, the convergence $\varpi^n\to\varpi^\infty$ in
 \eqref{eq:policy.topology} yields
 \begin{align*}
  \text{III}_n &=  \int_0^T\int_{\R^d}h(x) \left( \int_U\phi_b(t,x,a)\varpi^n(t,x,da) -\int_U\phi_b(t,x,a)\varpi^\infty(t,x,da)  \right)dxdt \to0.
 \end{align*}
 Thus the drift part of $\mathcal{G}$ converges. 
  The running reward part can be treated in a similar way. Combining the drift and reward estimates gives
 $$
 \mathcal{G}(\pi^n,m^n;\varpi^n) \to \mathcal{G}(\pi^\infty,m^\infty;\varpi^\infty),
 $$
 which proves the joint continuity.
\end{proof}

\begin{Theorem}[Existence of a fixed point]\label{thm:fixedpoint}
Let Assumptions \ref{assume.r} and \ref{assume.lipsa.U} hold and suppose $\nu\in\Pc_{2+\kappa}(\R^d)$. There exists $(\pi^*,m^*)\in\Ec_0$ such that
$$
(\pi^*,m^*)\in\Phi(\pi^*,m^*).
$$
In particular,
 \be\label{eq:fixedpoint.conditions}
\pi^*\in\argmax_{\varpi\in\Ac}\mathcal{G}(\pi^*,m^*;\varpi),
\qquad
m^*=\mu^{\pi^*,m^*}.
\ee
\end{Theorem}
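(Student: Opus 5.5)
We apply the Kakutani--Fan--Glicksberg theorem to $\Phi$ on $\Ec_0=\Ac\times\mathscr{M}_\nu^{\kappa,C^*}$, viewed inside the product of the locally convex space of signed kernels (Lemma \ref{lm:policy.compact}) and the span of flows with the seminorms $p_\varphi$. Four things must be checked: the ambient space is locally convex Hausdorff and $\Ec_0$ is nonempty, compact and convex in it; $\Phi$ maps $\Ec_0$ into itself; $\Phi$ has nonempty convex (hence, with closed graph, compact) values; and $\Phi$ has closed graph.

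\textbf{Compact convex domain.} Compactness and convexity of $\Ac$ is Lemma \ref{lm:policy.compact}. Convexity and $d$-compactness of $\mathscr{M}_\nu^{\kappa,C^*}$ are recorded in Section \ref{subsect:nota}. Since $d$-convergence implies $\sup_t|\int\varphi\,d(m^n_t-m_t)|\to0$ for $\varphi\in C_b$, the identity map from $(\mathscr{M}_\nu^{\kappa,C^*},d)$ to the $p_\varphi$-topology is continuous. This topology is Hausdorff, since $C_b$ separates measures. A continuous bijection from a compact space onto a Hausdorff space is a homeomorphism, so the two topologies coincide on this set. Nonemptiness holds because the constant flow $m_t\equiv\nu$ lies in the set once $C^*$ is large. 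By Proposition \ref{prop:uniform.est}, $\mu^{\pi,m}\in\mathscr{M}_\nu^{\kappa,C^*}$, so $\Phi(\pi,m)\subset\Ec_0$.

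\textbf{Nonempty convex values.} For fixed $(\pi,m)$, the map $\varpi\mapsto\mathcal{G}(\pi,m;\varpi)$ is linear (affine) in $\varpi$ and continuous on the compact set $\Ac$ by Lemma \ref{lm:G.continuous}. It therefore attains its maximum, and the argmax set is convex and closed, hence compact. The second component $\mu^{\pi,m}$ is a single point, so $\Phi(\pi,m)$ is a nonempty compact convex set. Nonemptiness could also be obtained directly by a measurable selection of a pointwise maximizer $a^*(t,x)$ of the Carath\'eodory integrand, but the compactness argument suffices.

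\textbf{Closed graph (main step).} Take $(\pi^n,m^n)\to(\pi,m)$ and $(\varpi^n,\mu^n)\in\Phi(\pi^n,m^n)$ with $(\varpi^n,\mu^n)\to(\varpi,\mu)$. Since the topology on $\Ac$ comes from a Young-measure compact set, I would argue with nets, or use sequences if metrizability is available. The flow component is handled by Proposition \ref{prop:flow.continuity}: $\mu^n=\mu^{\pi^n,m^n}\to\mu^{\pi,m}$, so $\mu=\mu^{\pi,m}$. For the policy component, fix any $\tilde\pi\in\Ac$. Optimality gives $\mathcal{G}(\pi^n,m^n;\varpi^n)\ge\mathcal{G}(\pi^n,m^n;\tilde\pi)$. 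Joint continuity (Lemma \ref{lm:G.continuous}) lets us pass to the limit on both sides, yielding $\mathcal{G}(\pi,m;\varpi)\ge\mathcal{G}(\pi,m;\tilde\pi)$, so $\varpi$ is a maximizer. The delicate point is that Lemmas \ref{lm:V.stability}, \ref{lm:G.continuous} and Proposition \ref{prop:flow.continuity} are stated for sequences. If the stable topology restricted to $\Ac$ is not metrizable, I would use that Young measures with fixed finite marginal $h\,dt\,dx$ on a Polish base with compact $U$ form a metrizable compact set, so sequences suffice; alternatively, each proof extends verbatim to nets via the subsequence-free uniform bounds.

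With these properties, the Kakutani--Fan--Glicksberg theorem yields $(\pi^*,m^*)\in\Phi(\pi^*,m^*)$. Unwinding the definition \eqref{eq:Phi.direct} gives exactly \eqref{eq:fixedpoint.conditions}.
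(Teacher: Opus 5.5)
Your proposal is correct and follows the paper's proof essentially step for step. Both arguments establish compactness and convexity of $\Ec_0$, obtain nonempty compact convex values from the continuity and affinity of $\mathcal{G}(\pi,m;\cdot)$ on the compact set $\Ac$, prove the closed graph via Proposition \ref{prop:flow.continuity} and a limit in the optimality inequality using Lemma \ref{lm:G.continuous}, and then apply Kakutani--Fan--Glicksberg. Your extra details are all sound and are left implicit in the paper: the compact-to-Hausdorff argument identifying the $d$-topology with the $p_\varphi$-topology on $\mathscr{M}_\nu^{\kappa,C^*}$, nonemptiness of $\Ec_0$, $\Phi(\Ec_0)\subset\Ec_0$, and metrizability of Young measures with fixed marginal $h\,dt\,dx$ so that the sequential lemmas suffice. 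One caveat: your fallback claim that the proofs extend verbatim to nets is not right as stated, since dominated convergence and Scheff\'e's lemma are sequential tools, but the metrizability argument makes that fallback unnecessary.
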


\begin{proof}
We first show that The set $\Ec_0$ is compact and convex. The compactness and convexity of $\Ac$ follow from Lemma \ref{lm:policy.compact}. Also, $\mathscr{M}_\nu^{\kappa,C^*}$ is a compact convex subset of a locally convex topological vector space, with the same topology as the metric $d$ on this set. Hence $\Ec_0=\Ac\times\mathscr{M}_\nu^{\kappa,C^*}$ is a compact convex subset of a locally convex topological vector space.

Now we show the properties of mapping $\Phi$. Fix $(\pi,m)\in\Ec_0$. By Lemma \ref{lm:G.continuous}, $\varpi\mapsto\mathcal{G}(\pi,m;\varpi)$ is continuous on the compact set $\Ac$, so the maximum is attained. Hence $\Phi(\pi,m)$ is nonempty. Since $\mathcal{G}(\pi,m;\cdot)$ is affine, its set of maximizers is convex. It is also closed in the compact set $\Ac$, hence compact. The second component of every element of $\Phi(\pi,m)$ is the singleton $\{\mu^{\pi,m}\}$, which is convex and compact. Thus $\Phi(\pi,m)$ is compact and convex. Moreover, $\Phi$ is upper hemicontinuous. Indeed, let $(\pi^n,m^n)\to(\pi,m)$ and $(\varpi^n,\mu^{\pi^n,m^n})\in\Phi(\pi^n,m^n)$ satisfy
$$
(\varpi^n,\mu^{\pi^n,m^n})\to(\varpi,\mu).
$$
Proposition \ref{prop:flow.continuity} gives $\mu=\mu^{\pi,m}$. For any $\widehat\pi\in\Ac$,
$$
\mathcal{G}(\pi^n,m^n;\varpi^n)\geq\mathcal{G}(\pi^n,m^n;\widehat\pi).
$$
Passing to the limit by Lemma \ref{lm:G.continuous} yields
$$
\mathcal{G}(\pi,m;\varpi)\geq\mathcal{G}(\pi,m;\widehat\pi).
$$
Thus $\varpi$ is a maximizer and $(\varpi,\mu^{\pi,m})\in\Phi(\pi,m)$. Hence the graph of $\Phi$ is closed. Since the domain and range are compact, $\Phi$ is upper hemicontinuous.	
	
Since $\Ec_0$ is a nonempty compact convex subset of a locally convex topological vector space, and $\Phi:\Ec_0\to\Ec_0$ is upper hemicontinuous with nonempty compact convex values. The Kakutani--Fan--Glicksberg fixed-point theorem therefore yields a fixed point $(\pi^*,m^*)\in\Ec_0$. The two identities in \eqref{eq:fixedpoint.conditions} follow immediately from the definition \eqref{eq:Phi.direct}.
\end{proof}

\subsection{Characterization of  an equilibrium as a fixed point}

We first convert the integrated maximizing property in \eqref{eq:fixedpoint.conditions} into the pointwise Hamiltonian condition. Define
\be\label{eq:def.hamiltonian} 
\begin{aligned}
H_0(t,s, x,a)  &:=b(s,x,m_t^*,a)\cdot D_xV^{\pi^*,m^*}(t,s,x)+r(s-t,x,m_t^*,a),\\
  H(t,s,x)  &:=  \sup_{\varpi\in\Pc(U)}  \int_u H_0(t,s, x,a)  \varpi(da). 
\end{aligned}
\ee 
\begin{Proposition}\label{prop:pointwise.max}
Let $(\pi^*,m^*)$ be a fixed point in Theorem \ref{thm:fixedpoint}. Then
 \be\label{eq:pointwise.max}
\begin{aligned}
&\tilde b(t,x,m_t^*,\pi^*(t,x))\cdot D_xV^{\pi^*,m^*}(t,t,x)
+\tilde r(0,x,m_t^*,\pi^*(t,x))\\
&=\sup_{\varpi\in\Pc(U)}\left\{\int_U[b(t,x,m_t^*,a)\cdot D_xV^{\pi^*,m^*}(t,t,x)+r(0,x,m_t^*,a)]\varpi(da)\right\}
\end{aligned}
\ee
for a.e. $(t,x)\in[0,T]\times\R^d$.
\end{Proposition}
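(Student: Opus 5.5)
The plan is a measurable-selection argument: build a pointwise maximizing kernel, then use the maximality of $\pi^*$ for $\mathcal{G}(\pi^*,m^*;\cdot)$ to force pointwise maximization almost everywhere. Set
\[
g(t,x,a):=b(t,x,m_t^*,a)\cdot D_xV^{\pi^*,m^*}(t,t,x)+r(0,x,m_t^*,a).
\]
This function is Borel in $(t,x)$ for each $a$, and continuous in $a$ by Assumption \ref{assume.lipsa.U}. It is also bounded by $\|b\|_\infty A^*+K_1$, by \eqref{eq:uniform.Vm}. Hence $g$ is Carath\'eodory.

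Define $G(t,x):=\max_{a\in U}g(t,x,a)$. Because $\int_U g\,d\varpi\le G$ for every $\varpi\in\Pc(U)$, $G$ coincides with the supremum on the right-hand side of \eqref{eq:pointwise.max}. It is Borel, since the maximum can be taken over a countable dense subset of the compact set $U$.

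First step: construct a maximizer. The argmax correspondence $(t,x)\mapsto\{a\in U: g(t,x,a)=G(t,x)\}$ has nonempty compact values. Since $g$ is Carath\'eodory, the measurable maximum theorem (Kuratowski--Ryll-Nardzewski / Aliprantis--Border, Thm.~18.19) provides a Borel selector $a^*(t,x)$. Set $\hat\pi(t,x):=\delta_{a^*(t,x)}$. This belongs to $\Ac$, and $\int_U g(t,x,a)\hat\pi(t,x,da)=G(t,x)$ for all $(t,x)$.

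Second step: compare with $\pi^*$. Pointwise, $\int_U g\,\pi^*(t,x,da)\le G(t,x)$. The fixed-point property \eqref{eq:fixedpoint.conditions} gives $\mathcal{G}(\pi^*,m^*;\pi^*)\ge\mathcal{G}(\pi^*,m^*;\hat\pi)$. Combining these,
\[
0\le\int_0^T\int_{\R^d}h(x)\Big(G(t,x)-\int_U g(t,x,a)\pi^*(t,x,da)\Big)\,dx\,dt=\mathcal{G}(\pi^*,m^*;\hat\pi)-\mathcal{G}(\pi^*,m^*;\pi^*)\le 0.
\]
The integrand is nonnegative and $h>0$ everywhere, so it vanishes for a.e. $(t,x)$. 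This is exactly \eqref{eq:pointwise.max}, once we note that $\int_U b\,d\pi^*\cdot D_xV=\tilde b\cdot D_xV$ and $\int_U r(0,\cdot)\,d\pi^*=\tilde r(0,\ldots)$. All integrals are finite because $g$ is bounded and $\int h=1$.

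The only point needing care is the first step: joint Borel measurability of $g$ and the existence of a Borel, not merely Lebesgue, selector. Measurability of $(t,x)\mapsto D_xV^{\pi^*,m^*}(t,t,x)$ is immediate because Proposition \ref{prop:uniform.est} makes it continuous, as $J^{\pi^*,m^*}\in\Cc^{0,1}_\alpha$. Joint measurability of $(t,x,a)\mapsto b(t,x,m_t^*,a)$ follows from Borel measurability of $b$ together with continuity of $t\mapsto m_t^*$ in $\Wc_2$. If only a Lebesgue-measurable selector were obtained, we would modify it on a null set to get a Borel one, which leaves the a.e. conclusion unchanged.
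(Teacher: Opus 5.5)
Your proposal is correct and is essentially the paper's own proof. Both take a Borel measurable selector $a^*(t,x)$ of the pointwise argmax of $H_0(t,t,x,\cdot)$ and set $\hat\pi=\delta_{a^*}$. Both then combine the fixed-point inequality $\mathcal G(\pi^*,m^*;\pi^*)\ge\mathcal G(\pi^*,m^*;\hat\pi)$ with the pointwise bound $\int_U H_0\,d\pi^*\le \max_U H_0$, so the nonnegative $h$-weighted gap integrates to zero and vanishes a.e. because $h>0$. Your additional checks on joint Borel measurability (via continuity of $t\mapsto m_t^*$ and of $D_xV^{\pi^*,m^*}(t,t,x)$) and on boundedness of the integrand fill in details the paper leaves implicit.
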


\begin{proof}
Recall \eqref{eq:def.hamiltonian}.
The function $(t,x,a)\mapsto H_0(t,t,x,a)$ is Borel measurable in $(t,x)$ and continuous in $a$. Since $U$ is compact, the measurable maximum theorem yields a measurable selector $a^*(t,x)\in U$ such that
$$
H_0(t,t,x,a^*(t,x))=\max_{a\in U}H_0(t,t,x,a).
$$
Set $\widehat\pi(t,x):=\delta_{a^*(t,x)}$. By the fixed-point property,
$$
\mathcal{G}(\pi^*,m^*;\pi^*)\geq\mathcal{G}(\pi^*,m^*;\widehat\pi).
$$
On the other hand,
$
\int_UH_0(t,t,x,a)\pi^*(t,x,da)\leq H_0(t,t,x,a^*(t,x))
$
for every $(t,x)$. Therefore
$$
\int_0^T\int_{\R^d}h(x)\left[\max_{a\in U}H_0(t, t,x,a)-\int_UH_0(t,t,x,a)\pi^*(t,x,da)\right]dxdt=0.
$$
The integrand is nonnegative and $h(x)>0$. It follows that equality holds a.e., which is exactly \eqref{eq:pointwise.max}.
\end{proof}

\begin{Proposition}[Strong EHJB and verification]\label{prop:verification}
Let $(\pi^*,m^*)$ be a fixed point in Theorem \ref{thm:fixedpoint}. Then $V^{\pi^*,m^*}\in\widetilde{\Cc}^{0,1}_{\alpha,[0,T]}\cap\widetilde W^{1,2}_{p,[0,T]}$ is a strong solution of
\be\label{eq:EHJB.linear}
\begin{cases}
\begin{aligned}
&0=\partial_sV^{\pi^*,m^*}(t,s,x)+\frac12\tr((\sigma\sigma^T)(s,x,m_s^*)D_x^2V^{\pi^*,m^*}(t,s,x))\\
&\qquad +\tilde b(s,x,m_s^*,\pi^*(s,x))\cdot D_xV^{\pi^*,m^*}(t,s,x)+\tilde r(s-t,x,m_s^*,\pi^*(s,x)),\\
&V^{\pi^*,m^*}(t,T,x)=F(t,x,m_T^*),
\end{aligned}
\end{cases}
\ee
and $\pi^*$ satisfies the equilibrium response condition \eqref{eq:def.equipi}.
\end{Proposition}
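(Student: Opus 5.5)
The first assertion follows from Proposition \ref{prop:uniform.est} applied with $(\pi,m)=(\pi^*,m^*)\in\Ec_0$. That result gives the regularity $V^{\pi^*,m^*}\in\widetilde{\Cc}^{0,1}_{\alpha,[0,T]}\cap\widehat W^{1,2}_{p,[0,T]}\subset\widetilde W^{1,2}_{p,[0,T]}$. It also shows that, for each fixed $t$, $V^{\pi^*,m^*}(t,\cdot,\cdot)$ is the unique strong solution of \eqref{eq:linear.V}, which is exactly \eqref{eq:EHJB.linear}. The real content is the equilibrium condition \eqref{eq:def.equipi}. I will verify it through a perturbation expansion of $J^{\pi'\otimes_{t,\epsilon}\pi^*,m^*}(t,x)-J^{\pi^*,m^*}(t,x)$ combined with Proposition \ref{prop:pointwise.max}.

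Fix $(t,x)$, $\pi'\in\Ac$, and write $\pi^\epsilon:=\pi'\otimes_{t,\epsilon}\pi^*$ and $V^*:=V^{\pi^*,m^*}$. The flow $m^*$ stays frozen, so the perturbation changes only the agent's dynamics. On $[t+\epsilon,T]$ the process $X^{\pi^\epsilon}$ uses $\pi^*$, so the Markov property gives
\[
J^{\pi^\epsilon,m^*}(t,x)=\E_{t,x}\Big[\int_t^{t+\epsilon}\tilde r(l-t,X_l^{\pi^\epsilon},m_l^*,\pi'(l,X^{\pi^\epsilon}_l))dl+V^*(t,t+\epsilon,X^{\pi^\epsilon}_{t+\epsilon})\Big].
\]
Apply the It\^o--Krylov formula to $V^*(t,\cdot,\cdot)$ along $X^{\pi^\epsilon}$ on $[t,t+\epsilon]$. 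This is legitimate because $V^*(t,\cdot,\cdot)\in W^{1,2}_p$ locally with $p>d+2$ and the law of $X^{\pi^\epsilon}$ has an $L^q$ density by Krylov's estimate. A localization as in Step 3 of Lemma \ref{lm:V.stability} handles the unbounded domain. Substituting \eqref{eq:EHJB.linear} then gives
\[
J^{\pi^\epsilon,m^*}(t,x)-J^{\pi^*,m^*}(t,x)=\E_{t,x}\int_t^{t+\epsilon}\Big[\widetilde{H_0}(t,l,X_l^{\pi^\epsilon},\pi'(l,X^{\pi^\epsilon}_l))-\widetilde{H_0}(t,l,X_l^{\pi^\epsilon},\pi^*(l,X^{\pi^\epsilon}_l))\Big]dl,
\]
where $\widetilde{H_0}(t,l,y,\varpi)$ denotes the integral of $b(l,y,m_l^*,a)\cdot D_xV^*(t,l,y)+r(l-t,y,m_l^*,a)$ against $\varpi(da)$. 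Note that the measure argument here is $m_l^*$; I will use this form rather than the $m_t^*$ written in \eqref{eq:def.hamiltonian}.

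Next I replace the first argument $t$ by the running time $l$. By \eqref{eq:uniform.tlip}, $|D_xV^*(t,l,y)-D_xV^*(l,l,y)|\le C(l-t)$, and $|r(l-t,\cdot)-r(0,\cdot)|\le K_1(l-t)$. Both errors integrate to $O(\epsilon^2)$, so up to $o(\epsilon)$ the integrand becomes
\[
\int_U G(l,y,a)\,\pi'(l,y,da)-\int_U G(l,y,a)\,\pi^*(l,y,da),\qquad G(l,y,a):=b(l,y,m_l^*,a)\cdot D_xV^*(l,l,y)+r(0,y,m_l^*,a).
\]
This is exactly the Hamiltonian appearing in \eqref{eq:pointwise.max}. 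By Proposition \ref{prop:pointwise.max}, this difference is $\le0$ for a.e. $(l,y)$. The marginal law of $X^{\pi^\epsilon}_l$ is absolutely continuous with an $L^q$ density, so the a.e. inequality transfers to the expectation. Hence the difference quotient satisfies $\epsilon^{-1}(J^{\pi^\epsilon,m^*}-J^{\pi^*,m^*})(t,x)\le O(\epsilon)$, and \eqref{eq:def.equipi} follows.

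The main obstacle is this last transfer step. Proposition \ref{prop:pointwise.max} holds only Lebesgue-a.e. in $(l,y)$, and near $l=t$ the process starts from the Dirac mass at $x$. I therefore need Krylov's estimate in the form $\E_{t,x}\int_t^{t+\epsilon}|f(l,X_l)|dl\le C\|f\|_{L^{d+1}}$ rather than pointwise densities. This estimate already shows that the expectation of any function vanishing a.e. is zero, which suffices. I would also check carefully that the measure argument in the Hamiltonian is consistently $m_l^*$ throughout the computation.
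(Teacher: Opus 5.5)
Your proposal is correct and follows essentially the same route as the paper. Both proofs apply It\^o--Krylov to $V^{\pi^*,m^*}(t,\cdot,\cdot)$ along the perturbed process with localization, substitute the linear PDE \eqref{eq:EHJB.linear}, replace the preference time $t$ by the running time $l$ via \eqref{eq:uniform.tlip}, and transfer the a.e.\ maximality of Proposition \ref{prop:pointwise.max} to the expectation through Krylov's estimate. The differences are cosmetic. The paper routes the comparison through the sup-Hamiltonian $H$ and the bound $L-H\geq -2C|t-s|^{\alpha/2}$, giving an $O(\varepsilon_0^{1+\alpha/2})$ remainder, whereas your Lipschitz bound on $\partial_t r$ gives $O(\epsilon^2)$. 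Your consistent use of $m_l^*$ is what the argument actually needs: the paper's proof effectively uses $m_s^*$, despite the $m_t^*$ written in \eqref{eq:def.hamiltonian}.
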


\begin{proof}
 The strong-solution property in \eqref{eq:EHJB.linear} follows from
 Proposition \ref{prop:uniform.est}. We prove the equilibrium response
 condition \eqref{eq:def.equipi} in several steps.
 
 \textbf{Step 1.} 
 For $(t,s,x)\in\Delta_{[0,T]}\times\R^d$, define
 \begin{align*}
  L(t,s,x)  &:=  \tilde b(s,x,m_s^*,\pi^*(s,x))  \cdot D_xV^{\pi^*,m^*}(t,s,x)  +\tilde r(s-t,x,m_s^*,\pi^*(s,x)),
 \end{align*}
and recall \eqref{eq:def.hamiltonian}.
We first verify a lower bound for $L-H$. We observe that there exists $C<\infty$ such that
  \be\label{eq:refined.verify.H}
  \sup_{x\in\R^d}|H(t,s,x)-H(s,s,x)|
  \leq C|t-s|^{\alpha/2},\qquad s\in[t,T].
 \ee
 Indeed, fix $(t,s,x)$, by compactness of $U$ and continuity in the action
 variable, there exists $\hat a\in U$ satisfying
 $$
 H(t,s,x)= b(s,x,m_s^*,\hat a)\cdot D_xV^{\pi^*,m^*}(t,s,x) +r(s-t,x,m_s^*,\hat a).
 $$
 By \eqref{eq:uniform.tlip} and the H\"older continuity of $r$ in its first time argument,
 \begin{align*}
  H(t,s,x)  &\leq  b(s,x,m_s^*,\hat a)\cdot  D_xV^{\pi^*,m^*}(s,s,x)  +r(0,x,m_s^*,\hat a)  +C|t-s|^{\alpha/2}\\
  &\leq H(s,s,x)+C|t-s|^{\alpha/2}.
 \end{align*}
 Interchanging the roles of $H(t,s,x)$ and $H(s,s,x)$ gives the reverse
 inequality and proves \eqref{eq:refined.verify.H}. Similarly,
  \be\label{eq:refined.verify.L}
  \sup_{x\in\R^d}|L(t,s,x)-L(s,s,x)|  \leq C|t-s|^{\alpha/2},\qquad s\in[t,T].
 \ee
 By Proposition \ref{prop:pointwise.max},
 $L(s,s,x)=H(s,s,x)$ for a.e. $(s,x)$. Consequently,
 \begin{align}
  L(t,s,x)-H(t,s,x)
  &=[L(t,s,x)-L(s,s,x)]  +[L(s,s,x)-H(s,s,x)]\notag\\
  &\quad+[H(s,s,x)-H(t,s,x)]  \geq-2C|t-s|^{\alpha/2} \label{eq:refined.verify.HL}
 \end{align}
 for a.e. $(s,x)\in[t,T]\times\R^d$.
 
 \textbf{Step 2.} 
Now we truncate the value functions. Fix an arbitrary $(t,x)\in[0,T)\times\R^d$ and
 $\pi'\in\Ac$. For $\varepsilon_0\in(0,T-t]$, let
 $X^{\pi'}$ solve, on $[t,t+\varepsilon_0]$,
 $$
 dX_l^{\pi'} = \tilde b(l,X_l^{\pi'},m_l^*,\pi'(l,X_l^{\pi'}))dl +\sigma(l,X_l^{\pi'},m_l^*)dW_l,\qquad X_t^{\pi'}=x.
 $$
 Define $\rho_{t,N}:=\inf\{l\geq t:X_l^{\pi'}\notin B_N(0)\}$. By boundedness of the coefficients,
 $$
 \P_{t,x}(\rho_{t,N}\leq t+\varepsilon_0) \leq \frac{1}{N^2} \E_{t,x}\left[ \sup_{l\in[t,t+\varepsilon_0]}|X_l^{\pi'}|^2 \right]\longrightarrow0.
 $$
 Let $A_0$ be a uniform bound for $|V^{\pi^*,m^*}|$, $|r|$, and $|F|$. Given $\varepsilon>0$, choose $N_0$ sufficiently large such that, for every $N\geq N_0$,
 \begin{align}
  &\left|  \E_{t,x}\left[  V^{\pi^*,m^*}(t,t+\varepsilon_0,X_{t+\varepsilon_0}^{\pi'})
  -  V^{\pi^*,m^*}  (t,(t+\varepsilon_0)\wedge\rho_{t,N},  X_{(t+\varepsilon_0)\wedge\rho_{t,N}}^{\pi'})  \right]  \right|\notag\\
  &\quad+  \left|  \E_{t,x}\left[  \int_t^{t+\varepsilon_0}  \tilde r(l-t,X_l^{\pi'},m_l^*,\pi'(l,X_l^{\pi'}))dl
  -  \int_t^{(t+\varepsilon_0)\wedge\rho_{t,N}}  \tilde r(l-t,X_l^{\pi'},m_l^*,\pi'(l,X_l^{\pi'}))dl  \right]  \right|
  \leq\varepsilon. \label{eq:refined.verify.localerr}
 \end{align}
 Indeed, both differences vanish on
 $\{\rho_{t,N}>t+\varepsilon_0\}$ and are bounded by a fixed constant on
 the complementary event. Notice that
 \begin{align}
  J^{\pi'\otimes_{t,\varepsilon_0}\pi^*,m^*}(t,x)
  =  \E_{t,x}\bigg[  \int_t^{t+\varepsilon_0}  \tilde r(l-t,X_l^{\pi'},m_l^*,\pi'(l,X_l^{\pi'}))dl
  +  V^{\pi^*,m^*}(t,t+\varepsilon_0,X_{t+\varepsilon_0}^{\pi'})
  \bigg]. \label{eq:refined.verify.concat}
 \end{align}
 Combining \eqref{eq:refined.verify.localerr} and
 \eqref{eq:refined.verify.concat}, for every $N\geq N_0$,
 \begin{align}
 J^{\pi'\otimes_{t,\varepsilon_0}\pi^*,m^*}(t,x)  -J^{\pi^*,m^*}(t,x)
  &\leq  \E_{t,x}\bigg[  V^{\pi^*,m^*}  (t,(t+\varepsilon_0)\wedge\rho_{t,N},  X_{(t+\varepsilon_0)\wedge\rho_{t,N}}^{\pi'})  -  V^{\pi^*,m^*}(t,t,x)\notag\\
  &\qquad\qquad   +\int_t^{(t+\varepsilon_0)\wedge\rho_{t,N}}  \tilde r(l-t,X_l^{\pi'},m_l^*,\pi'(l,X_l^{\pi'}))dl  \bigg]+\varepsilon. \label{eq:refined.verify.beforeIto}
 \end{align}
 
 \textbf{Step 3.} Now we show the desired result.
 Since
 $V^{\pi^*,m^*}(t,\cdot,\cdot)\in W^{1,2}_p(D_N(t,0))\cap\Cc^{0,1}_\alpha(D_N(t,0))$, the It\^o--Krylov formula is applicable on $[t,(t+\varepsilon_0)\wedge\rho_{t,N}]$ and gives
 \begin{align}
  &V^{\pi^*,m^*}  (t,(t+\varepsilon_0)\wedge\rho_{t,N},  X_{(t+\varepsilon_0)\wedge\rho_{t,N}}^{\pi'})
  -  V^{\pi^*,m^*}(t,t,x)\notag\\
  &=  \int_t^{(t+\varepsilon_0)\wedge\rho_{t,N}}  \bigg[  \partial_sV^{\pi^*,m^*}(t,l,X_l^{\pi'})
  +  \tilde b(l,X_l^{\pi'},m_l^*,\pi'(l,X_l^{\pi'}))  \cdot D_xV^{\pi^*,m^*}(t,l,X_l^{\pi'})\notag\\
  &\qquad\qquad\quad\qquad  +\frac12\tr\left(  (\sigma\sigma^T)(l,X_l^{\pi'},m_l^*)  D_x^2V^{\pi^*,m^*}(t,l,X_l^{\pi'})  \right)
  \bigg]dl\notag\\
  &\quad+  \int_t^{(t+\varepsilon_0)\wedge\rho_{t,N}}  D_xV^{\pi^*,m^*}(t,l,X_l^{\pi'})^T  \sigma(l,X_l^{\pi'},m_l^*)dW_l. \label{eq:refined.verify.Ito}
 \end{align}
 The stopped transition law of $X^{\pi'}$ has an $L^q$ density on the bounded cylinder. Hence the a.e. strong PDE \eqref{eq:EHJB.linear} and the a.e. estimate \eqref{eq:refined.verify.HL} may be evaluated under the expectation in \eqref{eq:refined.verify.Ito}. Taking expectations and adding the running reward under $\pi'$ gives
 \begin{align}
  &\E_{t,x}\bigg[  V^{\pi^*,m^*}  (t,(t+\varepsilon_0)\wedge\rho_{t,N},  X_{(t+\varepsilon_0)\wedge\rho_{t,N}}^{\pi'})  -V^{\pi^*,m^*}(t,t,x)  
  +\int_t^{(t+\varepsilon_0)\wedge\rho_{t,N}}  \tilde r(l-t,X_l^{\pi'},m_l^*,\pi'(l,X_l^{\pi'}))dl
  \bigg]\notag\\
  &=  \E_{t,x}\int_t^{(t+\varepsilon_0)\wedge\rho_{t,N}}  \bigg\{  -\tilde b(l,X_l^{\pi'},m_l^*,\pi^*(l,X_l^{\pi'}))  \cdot D_xV^{\pi^*,m^*}(t,l,X_l^{\pi'})\notag\\
  &\qquad \qquad \qquad\qquad\qquad  -\tilde r(l-t,X_l^{\pi'},m_l^*,\pi^*(l,X_l^{\pi'}))  +  \tilde b(l,X_l^{\pi'},m_l^*,\pi'(l,X_l^{\pi'}))  \cdot D_xV^{\pi^*,m^*}(t,l,X_l^{\pi'})\notag\\
  &\qquad \qquad \qquad\qquad\qquad  +\tilde r(l-t,X_l^{\pi'},m_l^*,\pi'(l,X_l^{\pi'}))  \bigg\}dl. \label{eq:refined.verify.afterPDE}
 \end{align}
 By the definition of $H$, for every $(l,y)$ and every relaxed action $\pi'(l,y)$,
 $$
 \tilde b(l,y,m_l^*,\pi'(l,y))\cdot D_xV^{\pi^*,m^*}(t,l,y) +\tilde r(l-t,y,m_l^*,\pi'(l,y)) \leq H(t,l,y).
 $$
 The first two terms inside the braces in \eqref{eq:refined.verify.afterPDE} equal $-L(t,l,X_l^{\pi'})$. Therefore,
 \begin{align}
  \text{right-hand side of \eqref{eq:refined.verify.afterPDE}}
  &\leq  \E_{t,x}\int_t^{(t+\varepsilon_0)\wedge\rho_{t,N}}  [H(t,l,X_l^{\pi'})-L(t,l,X_l^{\pi'})]dl\notag\\
  &\leq  \int_t^{t+\varepsilon_0}2C|l-t|^{\alpha/2}dl  =  \frac{2C}{1+\alpha/2}\varepsilon_0^{1+\alpha/2}. \label{eq:refined.verify.finalbound}
 \end{align}
 The second inequality follows from \eqref{eq:refined.verify.HL}; as noted above, the exceptional zero-measure set is invisible under the transition density of $X^{\pi'}$.Combining \eqref{eq:refined.verify.beforeIto} and \eqref{eq:refined.verify.finalbound} yields
 $$
 J^{\pi'\otimes_{t,\varepsilon_0}\pi^*,m^*}(t,x) -J^{\pi^*,m^*}(t,x) \leq \frac{2C}{1+\alpha/2}\varepsilon_0^{1+\alpha/2} +\varepsilon.
 $$
Letting $\eps\downarrow0$, dividing by $\eps_0$, and then sending $\eps_0\downarrow0$ proves \eqref{eq:def.equipi}.
\end{proof}

\begin{Theorem}[Existence of relaxed mean-field equilibrium]\label{thm:equi.existence}
Let Assumptions \ref{assume.r} and \ref{assume.lipsa.U} hold and suppose $\nu\in\Pc_{2+\kappa}(\R^d)$. Then there exists a relaxed mean-field equilibrium $(\pi^*,m^*)$ satisfying Definition \ref{def:equi.relaxed}.
\end{Theorem}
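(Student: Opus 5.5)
The plan is to assemble the results already established in this section. First, Theorem \ref{thm:fixedpoint} applies under exactly the hypotheses of the present statement: Assumptions \ref{assume.r} and \ref{assume.lipsa.U}, and $\nu\in\Pc_{2+\kappa}(\R^d)$. It yields a pair $(\pi^*,m^*)\in\Ec_0=\Ac\times\mathscr{M}_\nu^{\kappa,C^*}$ satisfying \eqref{eq:fixedpoint.conditions}. I will take this pair as the candidate equilibrium. In particular, $\pi^*$ is a Borel relaxed feedback policy, so $\pi^*\in\Ac$, and $m^*\in\mathscr{M}_\nu$, so the pair has the type required in Definition \ref{def:equi.relaxed}.

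Condition (a) of Definition \ref{def:equi.relaxed} follows from two earlier results. Proposition \ref{prop:pointwise.max} converts the integrated maximization in \eqref{eq:fixedpoint.conditions} into the pointwise a.e. Hamiltonian identity \eqref{eq:pointwise.max}. Proposition \ref{prop:verification} then uses this identity, together with the strong solution property of $V^{\pi^*,m^*}$ and the Itô--Krylov argument, to give \eqref{eq:def.equipi} for every $(t,x)\in[0,T)\times\R^d$ and every $\pi'\in\Ac$. Both results are stated for an arbitrary fixed point from Theorem \ref{thm:fixedpoint}, so I only need to cite them.

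Condition (b) comes from the second identity $m^*=\mu^{\pi^*,m^*}$. By definition, $\mu^{\pi^*,m^*}_t=\operatorname{law}(X^{\pi^*,m^*}_t)$, where $X^{\pi^*,m^*}$ solves the SDE with drift $\tilde b(t,\cdot,m^*_t,\pi^*(t,\cdot))$, diffusion $\sigma(t,\cdot,m^*_t)$ and initial law $\nu$. This is exactly \eqref{eq:def.equim}. By the Remark following Assumption \ref{assume.r}, that SDE has a unique strong solution, so $X^*$ is well defined and its marginal laws are $m^*_t$ for all $t\in[0,T]$.

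There is no substantive obstacle here. The only point to check is that the frozen flow in the Hamiltonian \eqref{eq:def.hamiltonian}, where $m^*_t$ appears in place of $m^*_s$, does not affect the verification. This concerns only the internal consistency of Proposition \ref{prop:verification}, which is assumed. The theorem therefore follows by combining Theorem \ref{thm:fixedpoint}, Propositions \ref{prop:pointwise.max} and \ref{prop:verification}, and the consistency identity.
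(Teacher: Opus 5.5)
Your proposal is correct and matches the paper's proof: you take the fixed point from Theorem \ref{thm:fixedpoint}, obtain condition (a) from Proposition \ref{prop:verification} (which builds on Proposition \ref{prop:pointwise.max}), and read off condition (b) from the identity $m^*=\mu^{\pi^*,m^*}$. Your remark about the flow argument in \eqref{eq:def.hamiltonian} is also right: writing $m_t^*$ there is a notational slip, since the verification argument actually uses $m_s^*$, and the two coincide on the diagonal $s=t$ where Proposition \ref{prop:pointwise.max} is applied.
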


\begin{proof}
Let $(\pi^*,m^*)$ be the fixed point obtained in Theorem \ref{thm:fixedpoint}. Proposition \ref{prop:verification} shows that $\pi^*$ satisfies the equilibrium response condition \eqref{eq:def.equipi}. The second identity in \eqref{eq:fixedpoint.conditions} gives
$$
m_t^*=\mu_t^{\pi^*,m^*}=\operatorname{law}(X_t^{\pi^*,m^*}),\qquad t\in[0,T],
$$
which is exactly the consistency condition \eqref{eq:def.equim}. Hence $(\pi^*,m^*)$ satisfies Definition \ref{def:equi.relaxed}.
\end{proof}

\begin{Remark}
The proof separates the two compactness mechanisms in the problem. The uniform $W^{1,2}_p$ and H\"older estimates yield stability of the auxiliary value function under weak convergence of relaxed policies, whereas the compact set $\mathscr{M}_\nu^{\kappa,C^*}$ controls the population flows. The Fokker--Planck duality argument is needed because the policy topology gives only weak-$*$ convergence of the controlled drift. Once these two stability results are available, the equilibrium Hamiltonian condition is obtained directly from the fixed-point property and no entropy regularization or vanishing-entropy limit is required.
\end{Remark}

\bibliographystyle{plain}
\bibliography{references}
\end{document}